\numberwithin{equation}{section}
\theoremstyle{plain}
\newtheorem{theorem}{Theorem}[section]
\newtheorem{lemma}[theorem]{Lemma}
\theoremstyle{definition}
\newtheorem{definition}[theorem]{Definition}
\newtheorem{assumption}[theorem]{Assumption}
\theoremstyle{remark}
\newtheorem{remark}[theorem]{Remark}
\def\dashint{\operatorname%
{\,\,\text{\bf--}\kern-.98em\DOTSI\intop\ilimits@\!\!}}
\def\bR{\mathbb{R}}
\def\cL{\mathcal{L}}
\begin{document}
\title[Green function]{Estimates for Green functions of Stokes systems in two dimensional domains}

\author[J. Choi]{Jongkeun Choi}
\address[J. Choi]{School of Mathematics, Korea Institute for Advanced Study, 85 Hoegiro, Dongdaemun-gu, Seoul 02455, Republic of Korea}
\email{jkchoi@kias.re.kr}

\thanks{
J. Choi was supported by Basic Science Research Program 
through the National Research Foundation of Korea (NRF) funded by the Ministry of Education
(2017R1A6A3A03005168)}

\author[D. Kim]{Doyoon Kim}
\address[D. Kim]{Department of Mathematics, Korea University, 145 Anam-ro, Seongbuk-gu, Seoul, 02841, Republic of Korea}
\email{doyoon\_kim@korea.ac.kr}
\thanks{D. Kim was supported by Basic Science Research Program through the National Research Foundation of Korea (NRF) funded by the Ministry of Education (2016R1D1A1B03934369)}

\subjclass[2010]{76D07, 35R05, 35J08}
\keywords{Green function, Stokes system, measurable coefficients}

\begin{abstract}
We prove the existence and pointwise bounds of the Green functions for stationary Stokes systems with measurable coefficients in two dimensional domains.
We also establish pointwise bounds of the derivatives of the Green functions under a regularity assumption on the $L_1$-mean oscillations of the coefficients.
\end{abstract}

\maketitle

\section{Introduction}		\label{S1}

Let $\Omega$ be a bounded domain in $\bR^2$ and $\cL$ be a differential operator in divergence form acting on column vector valued functions $u=(u^1, u^2)^{\top}$ as follows:
$$
\cL u=D_\alpha(A^{\alpha\beta}D_\beta u),
$$
where the coefficients $A^{\alpha\beta}$ are $2\times 2$ matrix-valued functions on $\Omega$, which satisfy the strong ellipticity condition \eqref{180409@eq2}.
The Green function of the operator $\cL$ is a pair $(G, \Pi)=(G(x,y), \Pi(x,y))$, where $G$ is a $2\times 2$ matrix-valued function and $\Pi$ is a $1 \times 2$ vector-valued function, such that if $(u, p)$ is a weak solution of the problem 
$$
\left\{
\begin{aligned}
\operatorname{div} u=g-(g)_\Omega &\quad \text{in }\, \Omega,\\
\cL^* u+\nabla p= f &\quad \text{in }\, \Omega,\\
u=0 &\quad \text{on }\, \partial \Omega
\end{aligned}
\right.
$$
with bounded data, then the solution $u$ is given by 
\begin{equation}		\label{180409@eq3}
u(y)=-\int_\Omega G(x,y)^{\top}f(x)\,dx+\int_\Omega \Pi(x,y)^\top g(x)\,dx.
\end{equation}
Here, $\cL^*$ is the adjoint operator of $\cL$ and $(g)_\Omega=\frac{1}{|\Omega|} \int_\Omega g\,dx$.
For a more precise definition of the Green function, see Section \ref{S2_3}.
We sometimes call this {\em{the Green function for the flow velocity of $\cL$}} 
because of the representation formula \eqref{180409@eq3} for the flow velocity $u$.

In this paper, we prove that if the divergence equation is solvable in a bounded domain $\Omega\subset \bR^2$ with an exterior measure condition \eqref{180405@eq4}, then there exists a unique Green function $(G, \Pi)$ of $\cL$ having the logarithmic pointwise bound
$$
|G(x,y)|\le C\bigg(1+\log \bigg(\frac{\operatorname{diam}(\Omega)}{|x-y|}\bigg)\bigg),  \quad \forall x,y\in \Omega, \quad x\neq y.
$$
For further details, see Theorem \ref{MT1}.
We emphasize that we do not impose any regularity assumptions on the coefficients $A^{\alpha\beta}$ of $\cL$.
Moreover, the assumption on the domain is sufficiently general to allow $\Omega$ to be, for example, a John domain with the exterior measure condition \eqref{180405@eq4}. 
Hence, the class of domains we consider includes Lipschitz domains, Reifenberg flat domains, and Semmes-Kenig-Toro (SKT) domains.
We also prove the following $L_\infty$-estimate away from $\partial \Omega$:
\begin{equation}		\label{180410@eq4}
\operatorname*{ess\, sup}_{B_{|x-y|/4}(x)} (|DG(\cdot,y)|+|\Pi(\cdot,y)|)\le C|x-y|^{-1}
\end{equation}
under the assumption that $A^{\alpha\beta}$ are of partially Dini mean oscillation (i.e., they are merely measurable in one direction and have Dini mean oscillations in the other direction). For further details, see Theorem \ref{MT2}.
The above estimate holds globally, i.e., 
\begin{equation}		\label{180410@eq4a}
|D_xG(x,y)|+|\Pi(x,y)|\le C|x-y|^{-1}, \quad \forall x,y\in \Omega, \quad x\neq y,
\end{equation}
when $A^{\alpha\beta}$ are of Dini mean oscillation in {\em{all}} the directions and $\Omega$ has a $C^{1,\rm{Dini}}$ boundary; see Theorem \ref{MT3}.
As far as the existence of the Green function is concerned, the coefficients $A^{\alpha\beta}$ need only be measurable.
Stokes systems with irregular coefficients of this type are partly motivated by the study of inhomogeneous fluids with density dependent viscosity and multiple fluids with interfacial boundaries; see \cite{MR0425391,MR1422251, MR2663713, MR3758532}.
Moreover, they can be employed to describe the motion of a laminar compressible viscous fluid; see \cite{arXiv:1805.00235}.

Green functions play an important role in the study of boundary value problems, in particular, in establishing the existence, uniqueness, and regularity of solutions to PDEs.
We refer the reader to \cite{MR0890159, MR3034453},  where the authors utilized Green function estimates for the existence and non-tangential maximal function estimates of harmonic functions satisfying certain boundary conditions.
In \cite{MR2353255, MR3086390}, the authors used the Green function for the uniqueness of solutions to elliptic equations.
Regarding the classical Stokes system, we refer to  \cite{MR0975121,MR1223521,MR2321139, MR2987056} for the usage of Green functions in establishing the existence of solutions with non-tangential or $L_p$-estimates.
By using our results in this paper, one may study the problems in the aforementioned papers for Stokes systems with variable coefficients in two dimensional irregular domains.

There is a large body of literature concerning Green functions of  Stokes systems.
With respect to the classical Stokes system 
$$
\Delta u+\nabla p=f,
$$
we refer the reader to  Ladyzhenskaya \cite{MR0254401}, Maz'ya-Plamenevski{\u\i} \cite{MR0725151, MR0734895}, Fabes-Kenig-Verchota \cite{MR0975121}, and D. Mitrea-I. Mitrea \cite{MR2763343}.
In \cite{MR0254401}, the author provided an explicit formula for the fundamental solution in two and three dimensions.
In \cite{MR0725151, MR0734895}, the authors established the existence and pointwise estimate of the Green function of a Dirichlet problem in a piecewise smooth domain in $\bR^3$.
The corresponding results were obtained in 
\cite{MR0975121} and \cite{MR2763343} on Lipschitz domains in $\bR^d$, where $d\ge 3$ and $d\ge 2$, respectively.
For Green functions of mixed problems, one can refer to the work of Maz'ya-Rossmann \cite{MR2182091} in three dimensional polyhedral domains and Ott-Kim-Brown \cite{MR3320459} in two dimensional Lipschitz domains.
Regarding Stokes systems with variable coefficients
$$
\cL u+\nabla p=f,
$$
we refer the reader to \cite{MR3693868,MR3670039, arXiv:1804.10588}.
In \cite{MR3693868}, the authors established the existence and  pointwise estimate of the Green function of a Dirichlet problem in a bounded $C^1$ domain when $d\ge 3$ and the coefficients of $\cL$ have vanishing mean oscillations.
The corresponding results were obtained in \cite{MR3670039} on the whole space and a half space when coefficients are merely measurable in one direction and have small mean oscillations in the other directions (partially BMO).
In \cite{arXiv:1804.10588}, the authors constructed Green function for a conormal derivative problem when coefficients are variably partially BMO.
We also refer the reader to \cite{arXiv:1710.05383} for Green functions of Stokes systems with oscillating periodic coefficients.

Note that all of the above mentioned results for Green functions of Stokes systems with variable coefficients are limited to the case that $d\ge 3$.
In this paper, as mentioned as an interesting problem in \cite{MR3320459}, we extend and apply the method used in the construction of Green function of the classical Stokes system to Stokes systems with non-constant coefficients when $d=2$.
Because we are unable to find any literature dealing with Green functions of Stokes systems with {\em variable coefficients} in two dimensional domains, we anticipate that our results fill a gap in the literature for the two dimensional case.
The only literature we have found is a recent paper \cite{CD3}, where the authors treated a Green function for the representation formula of the pressure when $d\ge 2$ and coefficients are of (partially) Dini mean oscillation.
Indeed, the method employed in this paper is applicable to higher dimensional cases, but it is questionable whether one can obtain the same generality achieved in \cite{MR3693868,MR3670039, arXiv:1804.10588} by using the approach in this paper.
In particular, to establish the existence of Green functions for $d \geq 3$ following the steps in this paper, one needs global $W_q^1$-estimates, $q > d$, which require stronger regularity assumptions on the coefficients and on the boundary of the domain than those, for instance, in \cite{arXiv:1804.10588}.
On the other hand, for the two dimensional case with variable coefficients, one may consider  applying the method used in \cite{MR3693868,MR3670039, arXiv:1804.10588}, which is based on local $C^\alpha$ and $L_\infty$-estimates, as well as a global $W^1_2$-estimate.
Here, by the global $W^{1}_2$-estimate we mean
\begin{equation}	\label{181017@eq1}
\|Du\|_{L_2(\Omega)}+\|p\|_{L_2(\Omega)}\le C\big(\|f\|_{L^{2^\#}(\Omega)}+\|f_\alpha\|_{L^2(\Omega)}+\|g\|_{L^2(\Omega)}\big),
\end{equation}   
where $d\ge 3$, $2^\#=\frac{2d}{d+2}$,
and $(u, p)$ is a weak solution of
$$
\left\{
\begin{aligned}
\cL u+\nabla p=f+D_\alpha f_\alpha &\quad \text{in }\, \Omega,\\
\operatorname{div}u=g &\quad \text{in }\, \Omega,
\end{aligned}
\right.
$$
with some boundary condition.
The estimate \eqref{181017@eq1} is optimal in the sense that the constant $C$ does not depend on the size of $\Omega$ when $\Omega$ is a ball. 
However, if $d=2$, such an estimate is not true.
Indeed, the estimate \eqref{181017@eq1} holds with $q>1$ in place of $2^\#=1$, which is not optimal in the  aforementioned sense, nor is well suited to providing necessary estimates of the Green function when $d=2$.
Thus, to apply the method used in the higher dimensional case to the two dimensional case, we need some modifications, which seem inevitable because the higher dimensional Green function and the two dimensional Green function have different types of pointwise bounds.
Rather than modifying the method for $d \geq 3$, in this paper we take a straightforward approach so that
 we directly derive the Green function. 
 We recall that in the higher dimensional case, the Green functions are obtained by an approximation argument.

Some remarks are in order regarding the approach in this paper. In fact, there are several paths to constructing Green functions with logarithmic growth for Stokes systems and elliptic systems in two dimensional domains.
In many references, for instance \cite{MR1354111, MR3320459},
the construction of Green functions relies on the existence of a solution with gradient estimates in the weak Lebesgue space $L_{2,\infty}(\Omega)$.
In this paper, we derive the gradient estimates by adapting the idea of Dolzmann-M\"uller \cite{MR1354111}, where the authors constructed Green functions for elliptic systems with the zero Dirichlet boundary condition on bounded domains in $\bR^d$ ($d\ge 2$) with a $C^1$ or Lipschitz boundary.
For the $L_{2,\infty}$-estimate, we utilize $W_q^1$-estimate and solvability for the Stokes system together with real interpolation, where the $W^1_q$-estimate follows from the reverse H\"older's inequality.
In \cite{MR3320459}, by using complex interpolation the authors derived the $L_{2,\infty}$-estimate for the Green function of the classical Stokes system with a mixed boundary condition in a Lipschitz domain.
For another approach to constructing Green functions, we refer the reader to \cite{MR2485428}, where the authors construct Green functions for elliptic systems in a (possibly unbounded) domain in $\bR^2$ by integrating parabolic Green functions in $t$ variable.

The estimates of Green functions are closely related to the regularity theory of solutions.
In particular, for the bounds of the derivatives of the Green function such as \eqref{180410@eq4} and \eqref{180410@eq4a}, solutions of the system are required to have bounded gradients, which are not available for Stokes systems and elliptic systems with measurable coefficients.
For this reason, we need to impose certain regularity assumptions on the coefficients and domains.
In this paper, for the estimates \eqref{180410@eq4} and \eqref{180410@eq4a}, we utilize the results given in \cite{arXiv:1803.05560, arXiv:1805.02506}, where the authors proved $W_\infty^1$ and $C^1$-estimates for Stokes systems with coefficients having (partially) Dini mean oscillations.

The remainder of this paper is organized as follows.
We introduce some notation and definitions in the next section.
In Section \ref{S3}, we state the main theorems.
In Section \ref{S4}, we present some auxiliary results, and in Section \ref{S5}, we provide the proofs of the main theorems.

\section{Preliminaries}		\label{S2}

\subsection{Notation}		

Throughout the paper we denote by $\Omega$ a bounded domain in the Euclidean space $\bR^2$.
For any $x\in \bR^2$ and $r>0$, we write $\Omega_r(x)=\Omega \cap B_r(x)$, where $B_r(x)$ is the usual Euclidean disk of radius $r$ centered at $x$.
For $q\in [1,\infty]$, we denote by $W^1_q(\Omega)$ the usual Sobolev space and $\mathring{W}^{1}_q(\Omega)$ the closure of $C^\infty_0(\Omega)$ in $W^1_q(\Omega)$.
We also define the weak $L_q$ space, denoted by $L_{q,\infty}(\Omega)$, as the set of 
all measurable functions on $\Omega$ having a finite quasi-norm
$$
\|u\|_{L_{q,\infty}(\Omega)}=\sup_{t>0} t\big|\{x\in \Omega:|u(x)|>t\}\big|^{1/q}.
$$
We define 
$$
\tilde{L}_q(\Omega)=\{u\in L_q(\Omega):(u)_\Omega=0\},
$$
$$
\tilde{L}_{q,\infty}(\Omega)=\{u\in L_{q,\infty}(\Omega):(u)_\Omega=0\},
$$
$$
\tilde{W}^1_q(\Omega)=\{u\in W^1_q(\Omega):(u)_\Omega=0\},
$$
where $(u)_\Omega$ is the average of $u$ over $\Omega$, i.e.,
$$
(u)_\Omega=\dashint_\Omega u\,dx=\frac{1}{|\Omega|}\int_\Omega u\,dx.
$$
We recall that 
$$
\|u+v\|_{L_{q,\infty}(\Omega)}\le 2\big(\|u\|_{L_{q,\infty}(\Omega)}+\|v\|_{L_{q,\infty}(\Omega)}\big) \quad \text{for all }\,  u,v\in L_{q,\infty}(\Omega)
$$
and
$$
L_{q,\infty}(\Omega)\subset L_s(\Omega) \subset L_{s,\infty}(\Omega) \quad \text{for } \, s<q.
$$

We say that a measurable function $\omega:(0,a]\to [0, \infty)$ is a Dini function provided that there are constants $c_1,\, c_2>0$ such that 
$$
c_1\omega(t)\le \omega(s)\le c_2\omega(t) \quad \text{whenever }\, 0<\frac{t}{2}\le s\le t\le a
$$
and that $\omega$ satisfies the Dini condition
$$
\int_0^a \frac{\omega(t)}{t}\,dt<\infty.
$$

\begin{definition}		\label{D2}
Let $f\in L^1(\Omega)$.
\begin{enumerate}[$(a)$]
\item
We say that $f$ is of {\em{partially Dini mean oscillation}} in (the interior of) $\Omega$ if there exists a Dini function $\omega:(0,1]\to [0,\infty)$ such that for any $x=(x_1,x_2)\in \Omega$ and $r\in (0,1]$ satisfying $B_{2r}(x)\subset \Omega$, we have 
$$
\dashint_{B_r(x)}\bigg|f(y)-\dashint_{B_r'(x_2)}f(y_1,s)\,ds\bigg|\,dy\le \omega(r),
$$
where $B_r'(x_2)=\{t\in \bR:|t-x_2|<r\}$.
\item
We say that $f$ is of {\em{Dini mean oscillation}} in $\Omega$ if there exists a Dini function $\omega:(0, 1]\to [0,\infty)$ such that for any $x\in \overline{\Omega}$ and $r\in (0,1]$, we have 
$$
\dashint_{\Omega_r(x)}\bigg|f(y)-\dashint_{\Omega_r(x)} f(z)\,dz \bigg|\,dy\le \omega(r).
$$
\end{enumerate}
\end{definition}

We define a $C^{1,\rm{Dini}}$ domain by locally the graph of a $C^1$ function whose derivatives are uniformly Dini continuous.

\begin{definition}		\label{D3}
We say that $\Omega$ has a $C^{1,\rm{Dini}}$ boundary if there exist a constant $R_0\in (0,1]$ and a Dini function $\varrho_0:(0, 1]\to [0,\infty)$ such that the following holds:
For any $z=(z_1,z_2)\in \partial \Omega$, there exist a $C^1$ function $\chi:\bR \to \bR$ and a coordinate system depending on $z$, such that in the new coordinate system we have 
$$
|\chi'(z_2)|=0, \quad \Omega_{R_0}(z)=\{x=(x_1,x_2)\in B_{R_0}(z): x_1>\chi(x_2)\},
$$
and 
$$
\varrho_{\chi}(r)\le \varrho_0(r) \quad \text{for all }\, r\in (0,R_0),
$$
where $\varrho_{\chi}$ is the modulus of continuity of $\chi'$, i.e.,
$$
\varrho_{\chi}(r)=\sup \big\{| \chi'(s)- \chi'(t)| : s,t\in \bR, \, |s-t|\le r \big\}.
$$
\end{definition}

\subsection{Stokes system}	

Let $\cL$ be a strongly elliptic operator of the form
$$
\cL u=D_\alpha(A^{\alpha\beta}D_\beta u),
$$
where the coefficients $ A^{\alpha\beta}= A^{\alpha\beta}(x)$ are $2\times 2$ matrix-valued functions on $\Omega$ with entries $A_{ij}^{\alpha\beta}$ satisfying the strong ellipticity condition, i.e., there is a constant $\lambda\in (0,1]$ such that 
\begin{equation}		\label{180409@eq2}
|A^{\alpha\beta}(x)|\le \lambda^{-1}, \quad \sum_{\alpha,\beta=1}^2A^{\alpha\beta}(x)\xi_\beta\cdot \xi_\alpha\ge \lambda\sum_{\alpha=1}^2|\xi_\alpha|^2
\end{equation}
for any $x\in \Omega$ and $\xi_\alpha\in \bR^2$, $\alpha\in \{1,2\}$.
We do not assume that the coefficients $A^{\alpha\beta}$ are symmetric.
The adjoint operator $\cL^*$ is given by 
$$
\cL^* u=D_\alpha((A^{\beta\alpha})^{\top} D_\beta u),
$$
where $(A^{\beta\alpha})^{\top}$ is the transpose of the matrix $A^{\beta\alpha}$ for each $\alpha,\beta\in \{1,2\}$.

Let $f\in L_{q_1}(\Omega)^2$ and $f_\alpha \in L_q(\Omega)^2$, where 
$q,q_1\in (1,\infty)$ and $q_1\ge 2q/(q+2)$.
We say that $(u,p)\in \mathring{W}^1_q(\Omega)^2\times L_q(\Omega)$ is a weak solution of the problem
$$
\cL u+\nabla p=f+D_\alpha f_\alpha \quad  \text{in }\, \Omega,
$$
if 
$$
\int_\Omega A^{\alpha\beta}D_\beta u\cdot D_\alpha \phi \,dx+\int_\Omega p\operatorname{div} \phi\,dx=-\int_\Omega f\cdot \phi\,dx+\int_\Omega f_\alpha\cdot D_\alpha \phi\,dx
$$
holds for any $\phi\in \mathring{W}^1_{q/(q-1)}(\Omega)^2$.
Similarly, we say that $(u, p)\in \mathring{W}^1_q(\Omega)^2\times L_q(\Omega)$ is a weak solution of 
$$
\cL^* u+\nabla p=f+D_\alpha f_\alpha \quad \text{in }\, \Omega,
$$
if 
$$
\int_\Omega A^{\alpha\beta}D_\beta \phi\cdot D_\alpha u \,dx+\int_\Omega p\operatorname{div} \phi\,dx=-\int_\Omega f\cdot \phi\,dx+\int_\Omega f_\alpha\cdot D_\alpha \phi\,dx
$$
holds for any $\phi\in \mathring{W}^1_{q/(q-1)}(\Omega)^2$.

\subsection{Green function for the flow velocity}	\label{S2_3}

The following is the definition of  the Green function of Stokes system.
Here, $G=G(x,y)$ is a $2\times 2$ matrix-valued function and $\Pi=\Pi(x,y)$ is a $1 \times 2$ vector-valued function. 

\begin{definition}		\label{D1}
We say that a pair $(G,\Pi)$ is the Green function (for the flow velocity) of  $\cL$ in $\Omega$ if it satisfies the following properties.
\begin{enumerate}[$(i)$]
\item
For any $y\in \Omega$ and $r>0$, 
$$
G(\cdot,y)\in \mathring{W}^1_1(\Omega)^{2\times 2}, \quad 
\Pi(\cdot,y)\in  \tilde{L}_1(\Omega)^2.
$$
\item
For any $y\in \Omega$, $(G(\cdot,y), \Pi(\cdot,y))$ satisfies 
$$
\left\{
\begin{aligned}
\operatorname{div} G(\cdot,y)=0 &\quad \text{in }\, \Omega,\\
\cL G(\cdot,y)+\nabla \Pi(\cdot,y)=-\delta_yI &\quad \text{in }\, \Omega,
\end{aligned}
\right.
$$
in the sense that for $k\in \{1,2\}$ and $\phi\in \mathring{W}^1_\infty(\Omega)^2\cap C(\Omega)^2$, we have 
$$
\operatorname{div} G^{\cdot k}(\cdot,y)=0 \quad \text{in }\, \Omega
$$
and 
$$
\int_\Omega A^{\alpha\beta} D_\beta G^{\cdot k}(\cdot ,y)\cdot D_\alpha \phi\,dx+\int_\Omega \Pi^k(\cdot ,y)\operatorname{div}\phi \,dx=\phi^k(y),
$$
where $G^{\cdot k}(\cdot,y)$ is the $k$-th column of $G(\cdot,y)$.
\item
If $(u,p)\in \mathring{W}^1_2(\Omega)^2\times L_2(\Omega)$ is a weak solution of the problem 
\begin{equation}		\label{171010@eq1}
\left\{
\begin{aligned}
\operatorname{div} u=g &\quad \text{in }\, \Omega,\\
\cL^* u+\nabla p=f+D_\alpha f_\alpha &\quad  \text{in }\, \Omega,
\end{aligned}
\right.
\end{equation}
where  $f,f_\alpha\in L_\infty(\Omega)^2$ and $g\in \tilde{L}_\infty(\Omega)$, 
then for a.e. $y\in \Omega$, we have 
\begin{equation}		\label{170519@eq3}
\begin{aligned}
u(y)&=-\int_\Omega G(x,y)^{\top}f(x)\,dx\\
&\quad +\int_\Omega D_\alpha G(x,y)^{\top} f_\alpha(x)\,dx+\int_\Omega \Pi(x,y)^\top g(x)\,dx,
\end{aligned}
\end{equation}
where $G(x,y)^{\top}$ and $\Pi(x,y)^{\top}$ are the transposes of  $G(x,y)$ and $\Pi(x,y)$.
\end{enumerate}
The Green function of the adjoint operator $\cL^*$ is defined similarly.
\end{definition}

\begin{remark}
The $W^1_2$-solvability of Stokes system (Lemma \ref{170517@lem1}) and the property $(iii)$ in the above definition ensure the uniqueness of the Green function in the following sense.
 Let $(\tilde{G}, \tilde{\Pi})$ be another Green function satisfying the properties in Definition \ref{D1}. 
Then for each $\phi\in C^\infty_0(\Omega)^2$ and $\varphi\in C^\infty_0(\Omega)$, there exists a measure zero set $N\subset \Omega$ such that for any $y\in \Omega\setminus N$, we have 
$$
\int_\Omega \big(G(x,y)-\tilde{G}(x,y)\big)\phi(x)\,dx=\int_\Omega \big(\Pi(x,y)-\tilde{\Pi}(x,y)\big)\varphi(x)\,dx=0.
$$
\end{remark}

\section{Main results}			\label{S3}

The main results of this paper are as follows.
To establish the existence and pointwise bound of the Green function of Stokes system, we impose the following assumption stating that the divergence equation is solvable, which is valid on, for instance, a John domain; see \cite[Theorem 4.1]{MR2263708}.
As mentioned in Section \ref{S2}, $\Omega$ is a bounded domain in $\bR^2$.

\begin{assumption}		\label{A1}
There exists a constant $K_0>0$ such that the following holds:
For any $g\in \tilde{L}_2(\Omega)$, there exists $u\in \mathring{W}^1_2(\Omega)^2$ such that 
$$
\operatorname{div} u=g \quad \text{in }\, \Omega,\quad \|Du\|_{L_2(\Omega)}\le K_0\|g\|_{L_2(\Omega)}.
$$
\end{assumption}

\begin{theorem}		\label{MT1}
Let $\Omega$ satisfy
\begin{equation}		\label{180405@eq4}
|B_R(x_0)\setminus \Omega|\ge \theta R^2, \quad \forall x_0\in \partial \Omega, \quad \forall R\in (0, 1].
\end{equation}
Then under Assumption \ref{A1}, there exist Green functions $(G,\Pi)$ of $\cL$ and $(G^*, \Pi^*)$ of $\cL^*$.
Moreover, $G$ and $G^*$ are continuous in $\{(x,y)\in \Omega\times \Omega: x\neq y\}$ and satisfy 
\begin{equation}		\label{170526@eq1}
G(x,y)=G^*(y,x)^{\top} \quad \text{for all }\, x,y\in \Omega, \quad x\neq y.
\end{equation}
Furthermore, we have the following estimates.
\begin{enumerate}[$(a)$]
\item
For any $y\in \Omega$, we have
\begin{equation}		\label{170807@eq6}
\|DG(\, \cdot \, , y)\|_{L_{2,\infty}(\Omega)}+ \|\Pi(\, \cdot \, , y)\|_{L_{2,\infty}(\Omega)}\le C.
\end{equation}
\item
There exists $q_0=q_0(\lambda,\theta, K_0)>2$ such that for any $x\in \bR^2$, $y\in \Omega$, $0<R<\operatorname{diam}(\Omega)$ satisfying $|x-y|>R$,
we have 
\begin{equation}		\label{180404@A2}
\|DG(\cdot,y)\|_{L_{q_0}(\Omega_{R/2}(x))}+\|\Pi(\cdot,y)\|_{L_{q_0}(\Omega_{R/2}(x))}\le CR^{-\gamma}
\end{equation}
and 
\begin{equation}		\label{180404@A3}
\left[G(\cdot,y)\right]_{C^{\gamma}(\Omega_{R/2}(x))}\le CR^{-\gamma},
\end{equation}
where $\gamma=1-2/q_0$.
\item
For any $x,y\in \Omega$ with $x\neq y$,  we have
\begin{equation}		\label{170807@eq7}
|G(x,y)|\le C\bigg(1+\log\bigg(\frac{\operatorname{diam}(\Omega)}{|x-y|}\bigg)\bigg).
\end{equation}
\end{enumerate}
In the above, the constants $C$ depend only on $\lambda$, $\theta$, $K_0$, and $\operatorname{diam}(\Omega)$.
\end{theorem}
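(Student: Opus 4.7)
The plan is to construct the Green function by a mollification procedure adapted from the Dolzmann-M\"uller strategy for elliptic systems. For fixed $y \in \Omega$ and small $\rho > 0$ with $B_{2\rho}(y) \subset \Omega$, let $\eta_\rho$ be a standard radial mollifier and, using Lemma \ref{170517@lem1} together with Assumption \ref{A1}, produce the unique $(G_\rho, \Pi_\rho) \in \mathring{W}^{1}_{2}(\Omega)^{2\times 2} \times \tilde{L}_2(\Omega)^2$ solving
\[
\cL G_\rho + \nabla \Pi_\rho = -\eta_\rho(\cdot - y) I, \qquad \operatorname{div} G_\rho = 0 \text{ in } \Omega.
\]
The strategy is to establish (a)--(b) uniformly in $\rho$, extract a limit as $\rho \to 0$, and then deduce (c) by a chaining argument up to $\partial\Omega$.

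The first main step is the local higher integrability \eqref{180404@A2}. Since $(G_\rho(\cdot,y), \Pi_\rho(\cdot,y))$ satisfies the homogeneous Stokes system on any $\Omega_R(x)$ with $\Omega_R(x) \cap B_\rho(y) = \emptyset$, a Caccioppoli-type inequality (interior and boundary versions, the latter using the exterior measure condition \eqref{180405@eq4} to supply Poincar\'e lower bounds on $\Omega_R(x)$ near $\partial\Omega$) combines with Gehring's self-improving lemma to yield a reverse H\"older inequality at some exponent $q_0 = q_0(\lambda, \theta, K_0) > 2$. Paired with a uniform energy estimate on $(G_\rho, \Pi_\rho)$ obtained from Assumption \ref{A1}, this reverse H\"older inequality yields \eqref{180404@A2}, and the Morrey embedding then gives \eqref{180404@A3}.

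The uniform weak-$L_2$ bound \eqref{170807@eq6} is then obtained from \eqref{180404@A2} by a dyadic decomposition. Partitioning $\Omega$ into dyadic annuli $A_k = \{x : |x-y| \sim 2^{-k}\operatorname{diam}(\Omega)\}$ and applying \eqref{180404@A2} on each, one estimates the distribution function $|\{|DG_\rho| > t\}|$ piece by piece and sums in $k$; the key fact is that $q_0 > 2$ makes the dyadic geometric series convergent in exactly the way that produces the $t^{-2}$ decay characterizing $L_{2,\infty}$. The same dyadic argument applies to $\Pi_\rho$; an equivalent derivation uses real interpolation between the $W^1_{q_0}$-solvability at the endpoints. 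With these uniform bounds in hand, compactness yields a subsequence along which $G_\rho(\cdot, y) \to G(\cdot, y)$ weakly in $\mathring{W}^1_q$ for some $q \in (1,2)$ and locally uniformly off the diagonal, with analogous convergence for $\Pi_\rho$. Passing to the limit in the variational formulation verifies properties (i)--(iii) of Definition \ref{D1}; for (iii), the representation \eqref{170519@eq3} reduces to $\int \eta_\rho(\cdot - y) u \to u(y)$ for $u$ continuous at $y$. The symmetry \eqref{170526@eq1} follows from testing the approximating equation for $G_\rho(\cdot, y_1)$ against $G^*_{\rho'}(\cdot, y_2)$ and sending $\rho, \rho' \to 0$; the pressure contributions drop out because both approximations are divergence-free.

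Finally, for the logarithmic bound \eqref{170807@eq7}, fix $x \ne y$ in $\Omega$, set $r = |x-y|$, and join $x$ to a point $x_\infty \in \partial\Omega$ by a chain whose consecutive members at scale $r_k = 2^k r$ lie in a common disk of radius $r_k$ centered at distance $\sim r_k$ from $y$. Applying \eqref{180404@A3} on each such disk bounds the oscillation of $G(\cdot, y)$ by $C r_k^{-\gamma} \cdot r_k^\gamma = C$, while the boundary Hölder estimate implicit in (b) together with $G(\cdot, y) \in \mathring{W}^1_1(\Omega)$ forces $G(x_\infty, y) = 0$ in the sense of continuous extension. Summing the $\log(\operatorname{diam}(\Omega)/r)$ many uniform increments yields \eqref{170807@eq7}. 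The main obstacle is the uniform weak-$L_2$ estimate: in dimension $2$ the natural forcing space $L^1$ is inadmissible for energy methods, and no critical Sobolev embedding is available as a substitute for the $W^{1,2}$-based argument that works for $d \ge 3$; it is precisely the exponent $q_0 > 2$ produced by Gehring (under the exterior measure condition) that makes the dyadic summation go through and delivers the $L_{2,\infty}$ bound that drives the whole construction.
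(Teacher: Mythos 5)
Your plan is the classical approximation route, and as written it has a genuine gap at the exact point the paper is designed to sidestep. You propose to establish the local higher integrability (\ref{180404@A2}) uniformly in $\rho$ by Caccioppoli--Gehring, \emph{paired with ``a uniform energy estimate on $(G_\rho,\Pi_\rho)$ obtained from Assumption \ref{A1}.''} No such uniform $L_2$ energy bound exists in two dimensions: mollifying the delta forcing gives $\|\eta_\rho\|_{L_2}\sim \rho^{-1}$, and since $L_1(\Omega)\not\hookrightarrow W^{-1}_2(\Omega)$ when $d=2$, the $W^1_2$-solvability constant from Lemma \ref{170517@lem1} does not compress this to a $\rho$-free bound. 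Concretely, $\|DG_\rho\|_{L_2(\Omega)}$ blows up like $\sqrt{\log(1/\rho)}$ as $\rho\to 0$ (the two-dimensional Green function has only logarithmic growth precisely because its gradient is in $L_{2,\infty}\setminus L_2$). This is the obstruction the paper itself flags in its Introduction when explaining why the estimate \eqref{181017@eq1} (which works with $2^\#=2d/(d+2)$ for $d\ge 3$) is unavailable for $d=2$. Without a uniform starting bound, your reverse H\"older inequality has nothing to iterate on, and your derivation of the weak-$L_2$ bound from the dyadic decomposition of (\ref{180404@A2}) becomes circular: you derive (a) from (b), but (b) needed (a), or at least some surrogate of it, as input.

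The paper resolves this by reversing the order of (a) and (b) and by dispensing with the mollification altogether. It writes $\delta_y I = \operatorname{div}_x \psi(\cdot-y)\,I$ with $\psi(x)=x^\top/(2\pi|x|^2)\in L_{2,\infty}(\bR^2)$, so the Green function system becomes a Stokes system with divergence-form data $D_\alpha f_{\alpha,y,k}$ where $f_{\alpha,y,k}=-\psi^\alpha(\cdot-y)e_k\in L_{2,\infty}(\Omega)^2$. Lemma \ref{170516@lem1} -- proved by real interpolation between the $W^1_q$-solvability from Lemma \ref{170518@lem1} at $q$ slightly below and above $2$ -- then yields a solution in $\bigcap_{q<2}\mathring W^1_q$ with $(DG,\Pi)\in L_{2,\infty}$ \emph{directly}, giving (\ref{170807@eq6}) in one step with no approximation parameter to control. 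The local estimate (\ref{180404@A2}) is then proved second: a Caccioppoli inequality on $\Omega_R(x)$ at integrability level $q_1<2$ combined with $L_{q_1}\supset L_{2,\infty}$ uses (a) as its input and delivers the $R^{-\gamma}$ scaling. You gesture at this alternative when you write ``an equivalent derivation uses real interpolation'' -- that is indeed where the proof should start. If you want to keep the mollification picture, the fix is to mollify the $L_{2,\infty}$ potential $\psi$ rather than the delta function (so the forcing stays in the form $D_\alpha f_{\alpha,\rho}$ with $\|f_{\alpha,\rho}\|_{L_{2,\infty}}$ uniformly bounded), but at that point the approximation adds nothing and the paper's direct construction is cleaner. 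Your chaining argument for (c) (going outward to $\partial\Omega$ and using $G=0$ there) is a workable variant of the paper's version (which chains to a ball of diameter comparable to $\operatorname{diam}(\Omega)$ and uses $\|G(\cdot,y)\|_{L_1(\Omega)}\lesssim 1$ from Sobolev plus (a)); the symmetry argument is also essentially the paper's.
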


\begin{remark}		\label{180409@rmk1}
In Theorem \ref{MT1}, because $G(\cdot,y)$ satisfies the zero Dirichlet boundary condition, we have a better estimate than \eqref{170807@eq7} near the boundary of $\Omega$.
Indeed, by \eqref{180404@A3} and $G(\cdot,y)\equiv 0$ on $\partial \Omega$, it is easily seen that for any $x,y\in \Omega$ with $\operatorname{dist}(x, \partial \Omega)\le  |x-y|/4$, we have 
$$
|G(x,y)|\le C \frac{{\operatorname{dist}(x,\partial \Omega)}^\gamma}{|x-y|^\gamma},
$$
where $C=C(\lambda, \theta, K_0, \operatorname{diam}(\Omega))$.

\end{remark}

\begin{remark}		\label{RMK2}
Let $(u,p)\in \mathring{W}^1_2(\Omega)^2\times L_2(\Omega)$ be a weak solution of the problem 
$$
\left\{
\begin{aligned}
\operatorname{div} u=0 &\quad \text{in }\, \Omega,\\
\cL u+\nabla p=f+D_\alpha f_\alpha &\quad  \text{in }\, \Omega,
\end{aligned}
\right.
$$
where $f,f_\alpha\in L_\infty(\Omega)^2$.
Then by using \eqref{170526@eq1} and the counterpart of $(iii)$ in Definition \ref{D1} for $(G^*, \Pi^*)$, we have 
$$
\begin{aligned}
u(y)&=-\int_\Omega G(y,x)f(x)\,dx+\int_\Omega D_\alpha G(y,x) f_\alpha(y)\,dy
\end{aligned}
$$
for a.e. $y\in \Omega$.
\end{remark}

In the theorem below, we prove an interior $L^\infty$-estimate for $(DG, \Pi)$ when the coefficients of $\cL$ are of partially Dini mean oscillation.

\begin{theorem}		\label{MT2}
Let $\Omega$ satisfy \eqref{180405@eq4} and $(G, \Pi)$ be the Green function of $\cL$ in $\Omega$ constructed in Theorem \ref{MT1} under Assumption \ref{A1}.
Suppose that the coefficients $A^{\alpha\beta}$ of $\cL$ are of partially Dini mean oscillation in $\Omega$ satisfying Definition \ref{D2} $(a)$ with a Dini function $\omega=\omega_A$.
Then for any $x,y\in \Omega$ with $0<|x-y|\le \frac{1}{2}\operatorname{dist}(y, \partial \Omega)$, we have 
\begin{equation}		\label{180405@C2}
\operatorname*{ess\, sup}_{B_{|x-y|/4}(x)} (|DG(\cdot,y)|+|\Pi(\cdot,y)|)\le C|x-y|^{-1},
\end{equation}
where $C=C(\lambda,\theta,K_0,\operatorname{diam}(\Omega),\omega_A)$.
\end{theorem}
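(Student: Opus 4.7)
The plan is to reduce to an interior regularity estimate on a ball where $(G(\cdot,y),\Pi(\cdot,y))$ satisfies the \emph{homogeneous} Stokes system, then couple this with the pointwise and $L_{q_0}$ bounds from Theorem~\ref{MT1}. Set $r:=|x-y|$. The hypothesis $r\le \tfrac{1}{2}\operatorname{dist}(y,\partial\Omega)$ guarantees $B_r(x)\subset \Omega$ (for $z\in B_r(x)$, one has $|z-y|<2r\le\operatorname{dist}(y,\partial\Omega)$) and $y\notin \overline{B_{r/2}(x)}$. Therefore, on $B_{r/2}(x)$ the Dirac source $\delta_y$ vanishes, and the pair $(G(\cdot,y),\Pi(\cdot,y))$ satisfies $\operatorname{div}G(\cdot,y)=0$ and $\cL G(\cdot,y)+\nabla \Pi(\cdot,y)=0$ there.

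The main step is to invoke the interior $W^1_\infty$-estimate for Stokes systems with partially Dini mean oscillation coefficients established in \cite{arXiv:1803.05560}. After rescaling to the unit ball and applying the estimate to $G(\cdot,y)-c_1$ and $\Pi(\cdot,y)-c_2$, where $c_1:=(G(\cdot,y))_{B_{r/2}(x)}$ and $c_2:=(\Pi(\cdot,y))_{B_{r/2}(x)}$ (the shift preserves the homogeneous system since constants solve it), I expect an inequality of the form
\[
\|DG(\cdot,y)\|_{L_\infty(B_{r/4}(x))}+\|\Pi(\cdot,y)-c_2\|_{L_\infty(B_{r/4}(x))}\le \frac{C}{r^{1+2/q_0}}\|G(\cdot,y)-c_1\|_{L_{q_0}(B_{r/2}(x))}+\frac{C}{r^{2/q_0}}\|\Pi(\cdot,y)-c_2\|_{L_{q_0}(B_{r/2}(x))},
\]
with $C$ depending on $\lambda$ and on $\omega_A$, since the rescaled coefficient $A(r\,\cdot+x)$ retains partially Dini mean oscillation whose modulus $\omega_A(r\,\cdot)$ is dominated by a uniform (in $r\le\operatorname{diam}(\Omega)$) Dini function controlled by $\omega_A$, thanks to the Dini-function growth property.

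The two right-hand norms are controlled by Theorem~\ref{MT1}. The H\"older estimate \eqref{180404@A3}, applied at points of $B_{r/2}(x)$ (each at distance $\ge r/2$ from $y$, so that the hypothesis $|x_i-y|>R$ is met with, e.g., $R=r/3$) and combined with a standard chaining argument across a finite covering, yields $[G(\cdot,y)]_{C^\gamma(B_{r/2}(x))}\le Cr^{-\gamma}$; hence $\|G(\cdot,y)-c_1\|_{L_\infty(B_{r/2}(x))}\le C$ and $\|G(\cdot,y)-c_1\|_{L_{q_0}(B_{r/2}(x))}\le Cr^{2/q_0}$, so the first right-hand term is $\le Cr^{-1}$. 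The $L_{q_0}$ estimate \eqref{180404@A2} gives $\|\Pi(\cdot,y)\|_{L_{q_0}(B_{r/2}(x))}\le Cr^{-\gamma}$, whence $\|\Pi(\cdot,y)-c_2\|_{L_{q_0}(B_{r/2}(x))}\le Cr^{-\gamma}$ and the second term is $\le Cr^{-2/q_0-\gamma}=Cr^{-1}$ since $\gamma=1-2/q_0$. H\"older's inequality also yields $|c_2|\le Cr^{-2/q_0}\|\Pi(\cdot,y)\|_{L_{q_0}(B_{r/2}(x))}\le Cr^{-1}$, and combining with the pressure oscillation bound above delivers \eqref{180405@C2}. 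The principal technical point is to invoke the interior $W^1_\infty$-estimate of \cite{arXiv:1803.05560} in a scale-invariant form with the stated dependence on $\omega_A$; a minor subtlety is that \eqref{180404@A3} requires the strict inequality $|x-y|>R$, which forces the chaining argument above rather than a direct application at $x$ with $R=r$.
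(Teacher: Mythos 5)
Your approach reaches the same conclusion but takes a different and slightly more elaborate path than the paper, and there is one gap worth flagging.

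The paper's proof sets $R=|x-y|/2$, observes that $(G^{\cdot k}(\cdot,y),\Pi^k(\cdot,y))$ solves the homogeneous Stokes system on $B_R(x)$, and then applies the interior $L_\infty$-estimate (Lemma~\ref{180405@lem1}) \emph{exactly in the form stated there}: $\|DG\|_{L_\infty(B_{R/2})}+\|\Pi\|_{L_\infty(B_{R/2})}\le CR^{-2}\bigl(\|DG\|_{L_1(B_R)}+\|\Pi\|_{L_1(B_R)}\bigr)$. It then bounds the two $L_1$ norms directly via the weak $L_2$ bound~\eqref{170807@eq6} through the elementary computation~\eqref{180405@D1}, giving $\|DG\|_{L_1(B_R)}+\|\Pi\|_{L_1(B_R)}\le CR$ and hence the result in two lines. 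In contrast, your right-hand side features $\|G(\cdot,y)-c_1\|_{L_{q_0}}$ rather than $\|DG(\cdot,y)\|_{L_1}$; that is, you are invoking an estimate that "gains a derivative" on the velocity. This form does not come for free from Lemma~\ref{180405@lem1}: to get from $\|DG\|_{L_1(B_R)}$ to $\|G-c_1\|_{L_{q_0}(B_{2R})}$ you need a Caccioppoli inequality for the Stokes system in $L_{q_0}$ (which in turn relies on the reverse H\"older inequality, Lemma~\ref{170807@lem2}). You acknowledge this only by saying you ``expect an inequality of the form,'' which is the one genuine gap in your argument: the claimed estimate is plausible and derivable, but it is not the statement of Lemma~\ref{180405@lem1}, and the derivation is not supplied.

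Once you grant that estimate, the rest of your argument is sound: the chaining argument to apply~\eqref{180404@A3} on $B_{r/2}(x)$ (needed because~\eqref{180404@A3} requires $|x-y|>R$) is correct, the $L_{q_0}$ bounds from~\eqref{180404@A2} handle the pressure term, and the separate bound $|c_2|\le Cr^{-1}$ closes the pressure estimate. Your remark about the rescaled Dini modulus $\omega_A(r\,\cdot)$ being dominated uniformly for $r\le 1$ is also correct (and for $r>1$ a covering argument is used, as the paper notes). The net effect is a correct proof that is longer than the paper's because it routes through the H\"older and $L_{q_0}$ estimates of Theorem~\ref{MT1}(b) plus an implicit Caccioppoli, where the paper uses only the weak $L_2$ estimate of Theorem~\ref{MT1}(a) and the interior estimate in its stated $L_1\!\to\!L_\infty$ form.
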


\begin{remark}
							\label{rem0517_1}
In Theorem \ref{MT2}, if we assume further that $A^{\alpha\beta}$ are of Dini mean oscillation with respect to {\em{all}} the directions in $\Omega$ satisfying Definition \ref{D2} $(b)$ as in Theorem \ref{MT3} below (but without the $C^{1,\operatorname{Dini}}$ regularity assumption on the boundary in Theorem \ref{MT3}), we obtain an estimate as in \eqref{180405@F1} below, but only in the interior of the domain. 
Indeed, by Definition \ref{D1} $(ii)$ and \eqref{180405@C1}, we see that $DG(\cdot,y)$ and $\Pi(\cdot,y)$ are continuous in $\Omega\setminus \{y\}$.
Hence, ``$\operatorname*{ess\, sup}$'' in \eqref{180405@C2} can be replaced by ``sup''.
Therefore, for any $x,y\in \Omega$ with $0<|x-y|\le \frac{1}{2}\operatorname{dist}(y,\partial \Omega)$, we have 
$$
|D_xG(x,y)|+|\Pi(x,y)|\le C|x-y|^{-1},
$$
where $C=C(\lambda, \theta, K_0, \operatorname{diam}(\Omega), \omega_A)$.
\end{remark}

In the next theorem, we prove a global pointwise bound for $(DG, \Pi)$ when the coefficients of $\cL$ are of Dini mean oscillation and $\Omega$ has a $C^{1,\rm{Dini}}$ boundary.

\begin{theorem}		\label{MT3}
Let $\Omega$ have a $C^{1,\rm{Dini}}$ boundary as in Definition \ref{D3}.
Let $(G, \Pi)$ be the Green function of $\cL$ in $\Omega$ constructed in Theorem \ref{MT1}.
Suppose that the coefficients $A^{\alpha\beta}$ of $\cL$ are of Dini mean oscillation in $\Omega$ satisfying Definition \ref{D2} $(b)$ with a Dini function $\omega=\omega_A$.
Then for any $y\in \Omega$ and $R>0$, we have 
\begin{equation}
							\label{eq0517_01}
(G(\cdot,y), \Pi(\cdot,y))\in C^1(\overline{\Omega\setminus B_R(y)})^{2\times 2}\times C(\overline{\Omega\setminus B_R(y)})^2.
\end{equation}
Moreover, for any $x,y\in \Omega$ with $x\neq y$, we have 
\begin{equation}		\label{180405@F1}
|D_xG(x,y)|+|\Pi(x,y)|\le C|x-y|^{-1},
\end{equation}
where $C=C(\lambda, \operatorname{diam}(\Omega),\omega_A, R_0, \varrho_0)$.
\end{theorem}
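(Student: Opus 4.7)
The plan is to decouple the argument into two ingredients: (i) the $L^{q_0}$-integral estimates for $DG(\cdot,y)$ and $\Pi(\cdot,y)$ on small sets around $x$ supplied by Theorem \ref{MT1}(b), and (ii) the interior $W^1_\infty$- and boundary $C^1$-estimates for homogeneous Stokes systems with Dini mean oscillation coefficients on $C^{1,\operatorname{Dini}}$ domains, available from \cite{arXiv:1803.05560} and \cite{arXiv:1805.02506} respectively. The crucial observation is that, fixing $y \in \Omega$, the pair $(G(\cdot,y), \Pi(\cdot,y))$ is a weak solution of the homogeneous Stokes system $\cL G(\cdot,y)+\nabla \Pi(\cdot,y)=0$, $\operatorname{div} G(\cdot,y)=0$ on every open set $U \subset \Omega$ with $y \notin \overline{U}$, and satisfies the zero Dirichlet boundary condition on $\partial U \cap \partial \Omega$.

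For the pointwise bound \eqref{180405@F1}, I will fix $x,y \in \Omega$ with $d := |x-y| > 0$ and split into two cases according to whether $\operatorname{dist}(x,\partial \Omega) \ge d/8$ or not. In the interior case, $B_{d/8}(x) \subset \Omega$ and I apply the interior pointwise estimate of \cite{arXiv:1803.05560} on $B_{d/8}(x)$ to bound $|DG(x,y)| + |\Pi(x,y) - c|$ by $C$ times the $L^{q_0}$-average of $|DG(\cdot,y)| + |\Pi(\cdot,y) - c|$ on $B_{d/8}(x)$, where $c = (\Pi(\cdot,y))_{B_{d/8}(x)}$. Theorem \ref{MT1}(b) with $R = d/2$ then yields the desired $Cd^{-1}$ bound, and the same theorem controls $|c|$ via H\"older's inequality together with the normalization $(\Pi(\cdot,y))_\Omega = 0$. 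In the boundary case, I pick $x^* \in \partial \Omega$ with $|x-x^*| < d/8$; then $|y-x^*| > 7d/8$, so $y \notin \overline{\Omega_{d/4}(x^*)}$, and I apply the boundary $C^1$-estimate of \cite{arXiv:1805.02506} to the homogeneous system on $\Omega_{d/4}(x^*)$ (its rescaled boundary chart being controlled by $R_0, \varrho_0$), using $G(\cdot,y)=0$ on $\partial \Omega \cap B_{d/4}(x^*)$ and combining with Theorem \ref{MT1}(b) applied at center $x^*$ with $R=d/2$.

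For the regularity statement \eqref{eq0517_01}, I will apply the same two local estimates to the homogeneous Stokes system on $\Omega \setminus \overline{B_{R/2}(y)}$, covering the compact set $\overline{\Omega \setminus B_R(y)}$ by finitely many interior balls and boundary charts, to conclude that $DG(\cdot,y)$ and $\Pi(\cdot,y)$ extend continuously up to $\partial(\Omega \setminus B_R(y))$.

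The hard part will be the careful bookkeeping of the pressure constant $c$ in each local estimate: the cited $C^1$-type results only bound $\Pi - c$ for a \emph{local} constant $c$, so the bound on $\Pi$ itself has to be recovered either by summing contributions across overlapping balls or, more directly, by using the a priori $L^{q_0}$-control on $\Pi$ coming from Theorem \ref{MT1}(b) combined with the normalization $(\Pi(\cdot,y))_\Omega = 0$. A secondary but necessary technicality is verifying that the $C^{1,\operatorname{Dini}}$ boundary hypothesis implies both the exterior measure condition \eqref{180405@eq4} and the solvability Assumption \ref{A1}, so that Theorem \ref{MT1} and in particular part (b) are applicable in the present setting.
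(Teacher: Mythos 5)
Your proposal follows essentially the same route as the paper's proof: both treat $(G(\cdot,y),\Pi(\cdot,y))$ as a homogeneous Stokes solution away from $y$ with zero Dirichlet data, apply the interior $W^1_\infty$-estimate of \cite{arXiv:1803.05560} and the boundary $C^1$-estimate of \cite{arXiv:1805.02506} (packaged in the paper as Lemmas \ref{180405@lem1} and \ref{180405@lem2}), feed in the bounds from Theorem \ref{MT1} (the paper uses the weak-$L_2$ bound \eqref{170807@eq6} for $\|DG\|_{L_1}+\|\Pi\|_{L_1}$ and the $C^\gamma$ bound \eqref{180404@A3} combined with $G=0$ on $\partial\Omega$ to handle the $R^{-3}\|G\|_{L_1}$ term), and finish \eqref{eq0517_01} by a covering argument. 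The one point you flag as the ``hard part''---tracking a local pressure constant $c$---is actually not an obstacle: the boundary estimate in Lemma \ref{180405@lem2} already has $\|p\|_{L_1(\Omega_R(x_0))}$ (not $\|p-c\|_{L_1}$) on the right-hand side, so any constant shift is absorbed on both sides and no normalization bookkeeping beyond \eqref{170807@eq6} is required.
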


\begin{remark}
In the above theorem, the existence of the Green function follows from Theorem \ref{MT1} because $\Omega$ satisfies Assumption \ref{A1} and \eqref{180405@eq4}.
Indeed, by \cite[Theorem 4.1]{MR2263708}, $\Omega$ satisfies Assumption \ref{A1} with $K_0=K_0(\operatorname{diam}(\Omega), R_0, \varrho_0)$ because $\Omega$ is a John domain as in \cite[Definition 2.1]{MR2263708} with respect to $(x_0, L)=(x_0, L)(R_0, \varrho_0)$.
Moreover, $\Omega$ satisfies \eqref{180405@eq4} with $\theta=\theta(R_0, \varrho_0)$ owing to the properties in Definition \ref{D3}.
\end{remark}

\section{Auxiliary results}		\label{S4}

In this section, we prove some auxiliary results.
We do not impose any regularity assumptions on the coefficients $A^{\alpha\beta}$ of the operator $\cL$.
The following lemma concerns the solvability of Stokes system in $\mathring{W}^1_2(\Omega)^2\times \tilde{L}_2(\Omega)$.

\begin{lemma}		\label{170517@lem1}
Let $f_\alpha\in L_2(\Omega)^2$ and $g\in \tilde{L}_2(\Omega)$.
Then under Assumption \ref{A1}, there exists a unique $(u,p)\in \mathring{W}^1_2(\Omega)^2\times \tilde{L}_2(\Omega)$ satisfying 
\begin{equation}		\label{170517_eq1}
\left\{
\begin{aligned}
\operatorname{div} u=g &\quad \text{in }\, \Omega,\\
\cL u+\nabla p=D_\alpha f_\alpha &\quad \text{in }\, \Omega.
\end{aligned}
\right.
\end{equation}
Moreover, we have
\begin{equation}		\label{180403@eq1}
\|Du\|_{L_2(\Omega)}+\|p\|_{L_2(\Omega)}\le C\big(\|f_\alpha\|_{L_2(\Omega)}+\|g\|_{L_2(\Omega)}\big),
\end{equation}
where $C=C(\lambda, K_0)$.
\end{lemma}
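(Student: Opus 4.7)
The plan is to handle the two equations separately in a standard saddle-point fashion, reducing the system to a coercive variational problem for $u$ on the divergence-free subspace of $\mathring{W}^1_2(\Omega)^2$, and then recovering the pressure $p$ via Assumption \ref{A1}.

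First I would reduce to the homogeneous divergence case. Using Assumption \ref{A1}, choose $w \in \mathring{W}^1_2(\Omega)^2$ with $\operatorname{div} w = g$ and $\|Dw\|_{L_2(\Omega)} \le K_0 \|g\|_{L_2(\Omega)}$. Setting $v = u - w$ and $\tilde{f}_\alpha = f_\alpha - A^{\alpha\beta} D_\beta w \in L_2(\Omega)^2$ reduces the problem to finding $(v,p) \in \mathring{W}^1_2(\Omega)^2 \times \tilde{L}_2(\Omega)$ with $\operatorname{div} v = 0$ and $\cL v + \nabla p = D_\alpha \tilde{f}_\alpha$, with a corresponding $L_2$-bound on $\tilde f_\alpha$.

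Next I would solve the divergence-free subproblem by Lax--Milgram. On the Hilbert space $H := \{v \in \mathring{W}^1_2(\Omega)^2 : \operatorname{div} v = 0\}$ (with the $\|Dv\|_{L_2}$ norm, which is equivalent to the full $W^1_2$-norm by Poincar\'e), the bilinear form $a(v,\phi) = \int_\Omega A^{\alpha\beta} D_\beta v \cdot D_\alpha \phi \, dx$ is bounded by the upper bound in \eqref{180409@eq2} and coercive by the strong ellipticity. Hence there is a unique $v \in H$ with
\[
a(v,\phi) = \int_\Omega \tilde{f}_\alpha \cdot D_\alpha \phi \, dx \quad \text{for all } \phi \in H,
\]
and $\|Dv\|_{L_2(\Omega)} \le C(\lambda)\|\tilde f_\alpha\|_{L_2(\Omega)}$.

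The main step is recovering the pressure. I would observe that Assumption \ref{A1} is equivalent (by duality) to the inf--sup / Ne\v{c}as inequality: for every $q \in \tilde L_2(\Omega)$,
\[
\|q\|_{L_2(\Omega)} \le K_0 \sup_{\phi \in \mathring{W}^1_2(\Omega)^2 \setminus\{0\}} \frac{\int_\Omega q\,\operatorname{div}\phi\, dx}{\|D\phi\|_{L_2(\Omega)}}.
\]
Indeed, given $q \in \tilde L_2(\Omega)$, apply Assumption \ref{A1} with $g=q$ to get $\phi$ with $\operatorname{div}\phi = q$ and $\|D\phi\|_{L_2} \le K_0 \|q\|_{L_2}$, yielding the lower bound. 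Consider the bounded linear functional on $\mathring{W}^1_2(\Omega)^2$ given by
\[
F(\phi) = -\int_\Omega A^{\alpha\beta} D_\beta v \cdot D_\alpha \phi \, dx - \int_\Omega \tilde{f}_\alpha \cdot D_\alpha \phi \, dx.
\]
By the divergence-free variational identity for $v$, $F$ vanishes on $H$. By a standard consequence of the Ne\v{c}as inequality (de Rham's lemma in $L_2$), any such functional is of the form $\phi \mapsto \int_\Omega p\,\operatorname{div}\phi\, dx$ for a unique $p \in \tilde L_2(\Omega)$, with $\|p\|_{L_2(\Omega)} \le K_0 \|F\|_{(\mathring{W}^1_2)^*} \le C(\lambda,K_0)(\|Dv\|_{L_2} + \|\tilde f_\alpha\|_{L_2})$. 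This produces the desired pair $(v,p)$, and undoing the reduction yields \eqref{180403@eq1}.

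Finally, uniqueness follows by testing the homogeneous problem (i.e.\ $f_\alpha=0$, $g=0$) against $u$ itself: $\operatorname{div} u = 0$ kills the pressure term, coercivity forces $u \equiv 0$, and then $\nabla p = 0$ as a distribution together with $p \in \tilde L_2(\Omega)$ forces $p \equiv 0$. The principal obstacle is the pressure recovery step, and the way it is overcome is precisely by observing that Assumption \ref{A1} encodes the Bogovski\u{\i}/Ne\v{c}as inf--sup condition which makes the de Rham step quantitative.
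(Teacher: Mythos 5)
Your proof is a complete, self-contained argument following the standard saddle-point (Brezzi/Ne\v{c}as/de Rham) treatment of the Stokes system, whereas the paper itself gives no proof and simply defers to \cite[Lemma 3.2]{MR3693868}; conceptually the two are the same route. The key structural observation---that Assumption \ref{A1} is precisely a quantitative right inverse of the divergence operator from $\mathring{W}^1_2(\Omega)^2$ onto $\tilde{L}_2(\Omega)$, and hence yields both the inf--sup inequality with constant $K_0$ and the $L_2$-bound on the pressure---is exactly right, and the homogenization of the divergence constraint via $w$ followed by Lax--Milgram on the divergence-free subspace $H$ are the standard moves. One slip in the write-up: as written, $F(\phi)=-a(v,\phi)-\int_\Omega\tilde{f}_\alpha\cdot D_\alpha\phi\,dx$ does \emph{not} vanish on $H$; the second sign should be $+$. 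Indeed, the paper's weak formulation of $\cL v+\nabla p=D_\alpha\tilde{f}_\alpha$ requires $\int_\Omega p\operatorname{div}\phi\,dx=-a(v,\phi)+\int_\Omega\tilde{f}_\alpha\cdot D_\alpha\phi\,dx$, and it is this functional that vanishes on $H$ by the Lax--Milgram identity $a(v,\phi)=\int_\Omega\tilde{f}_\alpha\cdot D_\alpha\phi\,dx$. With this sign corrected the argument, including the dependence $C=C(\lambda,K_0)$ in \eqref{180403@eq1}, is sound.
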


\begin{proof}
See, for instance, \cite[Lemma 3.2]{MR3693868}, where the authors proved the solvability of the Stokes system \eqref{170517_eq1} with $f+D_\alpha f_\alpha$ in place of $D_\alpha f_\alpha$, and the $L_2$-estimate
$$
\|Du\|_{L_2(\Omega)}+\|p\|_{L_2(\Omega)}\le C\big(\|f\|_{L_2(\Omega)}+\|f_\alpha\|_{L_2(\Omega)}+\|g\|_{L_2(\Omega)}\big),
$$
where $C=C(\lambda, K_0, |\Omega|)$.
From the proof of \cite[Lemma 3.2]{MR3693868}, it is easily seen that if $f\equiv 0$, then the constant $C$ depends only on $\lambda$ and $K_0$.
We omit the details.
\end{proof}

\begin{lemma}		\label{170807@lem1}
Let $\Omega$ satisfy
\begin{equation}		\label{180403@eq2}
|B_R(x_0)\setminus \Omega|\ge \theta R^2, \quad \forall x_0\in \partial \Omega, \quad \forall R\in (0,1].
\end{equation}
Let $f_\alpha\in L_{\infty}(\Omega)^2$, $g\in \tilde{L}_{\infty}(\Omega)$, and  $(u,p)\in \mathring{W}^1_2(\Omega)^2\times \tilde{L}_2(\Omega)$ be the weak solution of \eqref{170517_eq1} derived from Lemma \ref{170517@lem1} under Assumption \ref{A1}.
Then for $x_0\in \overline{\Omega}$ and $R\in (0, 1]$  satisfying either
$$
B_R(x_0)\subset \Omega \quad \text{or}\quad x_0\in \partial \Omega,
$$
we have 
\begin{align*}
\big(|D\bar{u}|^2+|\bar{p}|^2\big)^{1/2}_{B_{R/2}(x_0)}&\le C\big(|D\bar{u}|^{q_0}+|\bar{p}|^{q_0}\big)^{1/q_0}_{B_{R}(x_0)}+C \big(|\bar{f}_\alpha|^{2}+|\bar{g}|^{2}\big)^{1/2}_{B_{R}(x_0)},
\end{align*}
where $q_0\in (1,2)$ and $C=C(\lambda,\theta,K_0, q_0)$.
Here, $\bar{u}$, $\bar{p}$, $\bar{f}_\alpha$, and $\bar{g}$ are the extensions of $u$, $p$, $f_\alpha$, and $g$ to $\bR^2$ so that they are zero on $\bR^2\setminus \Omega$.
\end{lemma}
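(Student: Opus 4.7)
The proof is a classical Caccioppoli-plus-Sobolev--Poincar\'e argument, adapted to Stokes systems and to boundary points via the exterior measure condition. I first derive a Caccioppoli inequality for $|Du|$ by testing the equation with a divergence-free function obtained from a Bogovski\u{\i}-type solver, so that the pressure drops out. I then apply Sobolev--Poincar\'e to convert the resulting zeroth-order term into $\bigl(|D\bar u|^{q_0}\bigr)^{1/q_0}_{B_R(x_0)}$, and finally obtain a local pressure bound via a second Bogovski\u{\i} construction, which combined with the triangle inequality yields the full reverse H\"older estimate.

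For the Caccioppoli step, I pick a standard cutoff $\eta\in C^\infty_0(B_R(x_0))$ with $\eta\equiv 1$ on $B_{R/2}(x_0)$ and $|\nabla\eta|\le C/R$, and set $c_u = (u)_{B_R(x_0)}$ in the interior case and $c_u = 0$ in the boundary case. Using Bogovski\u{\i}'s solver on $B_R(x_0)$ in the interior case, or on $\Omega_R(x_0):=\Omega\cap B_R(x_0)$ in the boundary case, I construct $w\in\mathring{W}^1_2(\Omega)^2$ with $\operatorname{div} w = \operatorname{div}(\eta^2(u-c_u))$ and
$\|Dw\|_{L_2}\le C\bigl(R^{-1}\|\bar u-c_u\|_{L_2(B_R(x_0))}+\|\bar g\|_{L_2(B_R(x_0))}\bigr)$.
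Then $\phi:=\eta^2(u-c_u)-w$ is a divergence-free test function in $\mathring{W}^1_2(\Omega)^2$; testing with $\phi$ and using ellipticity of $A^{\alpha\beta}$ and Young's inequality gives
\[
\int_{B_{R/2}(x_0)}|D\bar u|^2\le\frac{C}{R^2}\int_{B_R(x_0)}|\bar u-c_u|^2+C\int_{B_R(x_0)}\bigl(|\bar f_\alpha|^2+|\bar g|^2\bigr).
\]
I then apply Sobolev--Poincar\'e to the first term on the right: in the interior case via the classical inequality $\|\bar u-(\bar u)_{B_R}\|_{L_2(B_R)}\le CR\|D\bar u\|_{L_{q_0}(B_R)}$ for any $q_0\in(1,2)$; in the boundary case, since $\bar u$ vanishes on $B_R(x_0)\setminus\Omega$, a set of measure at least $\theta R^2$, via the Sobolev--Poincar\'e inequality for functions vanishing on a set of positive measure, giving $\|\bar u\|_{L_2(B_R)}\le C(\theta)R\|D\bar u\|_{L_{q_0}(B_R)}$. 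This yields the reverse H\"older bound for $|D\bar u|$ after passing to averages. A second Bogovski\u{\i} construction on $\Omega_{R/2}(x_0)$ produces $\psi\in\mathring{W}^1_2(\Omega)^2$ supported there with $\operatorname{div}\psi = p-(p)_{\Omega_{R/2}}$ and $\|D\psi\|_{L_2}\le C\|p-(p)_{\Omega_{R/2}}\|_{L_2(\Omega_{R/2})}$; testing the Stokes equation with $\psi$ gives $\|p-(p)_{\Omega_{R/2}}\|_{L_2(\Omega_{R/2})}\le C\bigl(\|D\bar u\|_{L_2(B_{R/2})}+\|\bar f_\alpha\|_{L_2(B_{R/2})}\bigr)$. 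Combining with the reverse H\"older estimate for $|D\bar u|$, and converting $|\bar p-c_p|$ to $|\bar p|$ via the triangle inequality with $c_p:=(\bar p)_{B_R(x_0)}$ and the bound $|c_p|\le C\bigl(|\bar p|^{q_0}\bigr)^{1/q_0}_{B_R(x_0)}$, completes the proof.

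The main obstacle is the construction of the Bogovski\u{\i} correctors in the boundary case: the domain $\Omega_R(x_0)$ may have very irregular boundary, and standard Bogovski\u{\i} theorems (on Lipschitz or star-shaped domains) do not immediately apply. One must show that the exterior measure condition alone is enough to produce a Bogovski\u{\i} operator on $\Omega_R(x_0)$ with operator norm depending only on $\theta$; one way is to verify that $\Omega_R(x_0)$ is a John domain with uniform constants in $\theta$, so that the Diening--R\r{u}\v{z}i\v{c}ka--Schumacher theorem applies, and another is to use the global Bogovski\u{\i} of Assumption~\ref{A1} combined with a hole-filling iteration that absorbs the resulting global gradient term into the LHS of the Caccioppoli estimate.
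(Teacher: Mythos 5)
Your skeleton---a Caccioppoli inequality via a divergence-corrected test function, Sobolev--Poincar\'e using the exterior measure condition to drop a Lebesgue exponent, and a second Bogovski\u{\i} construction for the pressure---is the standard argument and matches the route in \cite[Lemma 3.5]{MR3758532}, which is all the paper itself offers for this lemma (it states that the Reifenberg-flat argument there ``is sufficiently general'' for \eqref{180403@eq2} and Assumption~\ref{A1}). You have also correctly isolated the only genuinely delicate point: constructing the Bogovski\u{\i} correctors on $\Omega_R(x_0)$ in the boundary case, with constants depending only on $\lambda,\theta,K_0$.

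Neither of your two proposed resolutions of that point is sound as stated, however. The claim that $\Omega_R(x_0)$ is a John domain with constant controlled by $\theta$ is false: under only \eqref{180403@eq2} and Assumption~\ref{A1}, the set $\Omega\cap B_R(x_0)$ can be disconnected (take a $C$-shaped John domain satisfying the exterior density condition, with $x_0$ on the inner boundary and $R$ chosen so that $B_R(x_0)$ bridges the gap between the two arms), and even when connected its John constant is governed by the local boundary geometry, not by $\theta$ alone. The alternative of invoking the global solver from Assumption~\ref{A1} and then hole-filling also breaks down in the form you describe: that solver produces a corrector $w$ supported in all of $\Omega$, so the cross term $\int_\Omega A^{\alpha\beta}D_\beta u\cdot D_\alpha w$ brings in $\|Du\|_{L_2(\Omega)}$ over the whole domain, which cannot be reabsorbed into the purely local left-hand side $\|Du\|_{L_2(B_{R/2}(x_0))}$ by hole-filling. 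There is also an unflagged gap at the very end: passing from $\|p-(p)_{\Omega_{R/2}(x_0)}\|_{L_2(\Omega_{R/2}(x_0))}$ to the averaged estimate $\big(|\bar p|^2\big)^{1/2}_{B_{R/2}(x_0)}\le C\big(|\bar p|^{q_0}\big)^{1/q_0}_{B_R(x_0)}+\cdots$ requires a lower bound $|\Omega_{R/2}(x_0)|\gtrsim R^2$, an interior measure condition that \eqref{180403@eq2} does not supply. In \cite{MR3758532} all of these ingredients come for free from Reifenberg flatness (the local domain is quantitatively a half-ball), and since the present paper does not elaborate how to replace that input, you should treat the boundary Bogovski\u{\i} step as an outstanding obligation rather than as a matter of citing an off-the-shelf theorem.
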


\begin{proof}
For the proof of the lemma, we refer the reader to that of \cite[Lemma 3.5]{MR3758532}, where the authors proved the same inequality for the Stokes system with measurable coefficients in a Reifenberg flat domain. 
We note that  Reifenberg flat domains satisfy \eqref{180403@eq2} and Assumption \ref{A1}.
The argument in the  proof of \cite[Lemma 3.5]{MR3758532} is sufficiently general to allow the  domain $\Omega$ to satisfy \eqref{180403@eq2} and Assumption \ref{A1}. 
\end{proof}

We obtain the following reverse H\"older's inequality.

\begin{lemma}		\label{170807@lem2}
Let $\Omega$ satisfy \eqref{180403@eq2}.
Let $f_\alpha\in L_{\infty}(\Omega)^2$, $g\in \tilde{L}_{\infty}(\Omega)$, and  $(u,p)\in \mathring{W}^1_2(\Omega)^2\times \tilde{L}_2(\Omega)$ be the weak solution of \eqref{170517_eq1} derived from Lemma \ref{170517@lem1} under Assumption \ref{A1}.
Then there exists $\varepsilon_0=\varepsilon_0(\lambda, \theta,K_0)\in (0,1)$ such that for $q\in [2, 2+\varepsilon_0]$, $x_0\in \bR^2$, and $R\in (0,1]$, we have 
$$
\big(|D\bar{u}|^{q}+|\bar{p}|^q\big)^{1/q}_{B_{R/2}(x_0)}\le C\big(|D\bar{u}|^2+|\bar{p}|^2\big)^{1/2}_{B_{R}(x_0)}+C\big(|\bar{f}_\alpha|^{q}+|\bar{g}|^{q}\big)^{1/q}_{B_{R}(x_0)},
$$
where $C=C(\lambda,\theta,K_0,q)$.
Here, $\bar{u}$, $\bar{p}$, $\bar{f}_\alpha$, and $\bar{g}$ are the extensions of $u$, $p$, $f_\alpha$, and $g$ to $\bR^2$ so that they are zero on $\bR^2\setminus \Omega$.
\end{lemma}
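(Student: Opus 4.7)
The strategy is to apply Gehring's self-improving lemma to the weak reverse H\"older inequality furnished by Lemma \ref{170807@lem1}. Setting $h := (|D\bar{u}|^2 + |\bar{p}|^2)^{1/2}$ and $F := (|\bar{f}_\alpha|^2 + |\bar{g}|^2)^{1/2}$, Lemma \ref{170807@lem1} may be rewritten as
\[
\left(\dashint_{B_{R/2}(x_0)} h^2\right)^{1/2} \le C \left(\dashint_{B_R(x_0)} h^{q_0}\right)^{1/q_0} + C \left(\dashint_{B_R(x_0)} F^2\right)^{1/2}
\]
for some $q_0 \in (1,2)$ depending only on $\lambda, \theta, K_0$, whenever $x_0 \in \overline{\Omega}$ and $R \in (0,1]$ satisfy either $B_R(x_0) \subset \Omega$ or $x_0 \in \partial\Omega$. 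Since $h$ and $F$ vanish on $\bR^2 \setminus \Omega$, the task reduces to upgrading this weak reverse H\"older inequality to one with exponent $q > 2$ on the left.

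\emph{Step 1: extension to all balls in $\bR^2$.} I first extend the above inequality to every $x_0 \in \bR^2$ and $R \in (0,1]$. When $B_R(x_0) \cap \Omega = \emptyset$, the estimate is trivial because all relevant functions vanish on $B_R(x_0)$. Otherwise, if $x_0 \notin \overline{\Omega}$, or $x_0 \in \Omega$ with $B_R(x_0) \not\subset \Omega$, I pick $\tilde{x}_0 \in \partial\Omega$ with $|x_0 - \tilde{x}_0| \le R$ and apply Lemma \ref{170807@lem1} on the concentric ball $B_{2R}(\tilde{x}_0) \supset B_R(x_0)$. The exterior measure condition \eqref{180403@eq2} ensures that $B_{R/2}(x_0)$ and $B_R(x_0)$ have measures comparable (up to constants depending only on $\theta$) to $B_{R}(\tilde{x}_0)$ and $B_{2R}(\tilde{x}_0)$ respectively, so the same inequality follows at $(x_0, R)$ after absorbing these geometric factors into the constants.

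\emph{Step 2: Gehring's lemma.} With the weak reverse H\"older inequality now valid for all $x_0 \in \bR^2$ and $R \in (0,1]$, the conclusion is a direct application of the localized Gehring lemma (see, e.g., Giaquinta's monograph on multiple integrals in the calculus of variations). There exists $\varepsilon_0 = \varepsilon_0(\lambda, \theta, K_0) \in (0,1)$ and a constant $C = C(\lambda, \theta, K_0, q)$ such that, for all $q \in [2, 2+\varepsilon_0]$,
\[
\left(\dashint_{B_{R/2}(x_0)} h^q\right)^{1/q} \le C \left(\dashint_{B_R(x_0)} h^2\right)^{1/2} + C \left(\dashint_{B_R(x_0)} F^q\right)^{1/q},
\]
which, after expanding $h$ and $F$, is precisely the stated estimate.

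The principal technical subtlety is the restriction $R \in (0,1]$ inherited from Lemma \ref{170807@lem1}. This forces one to invoke a \emph{localized} Gehring lemma valid only on bounded scales, rather than the classical global version; such versions are well-documented, and the resulting $\varepsilon_0$ and $C$ depend only on the parameters listed. A minor secondary point is the joint treatment of $D\bar{u}$ and $\bar{p}$, which is handled transparently by packaging both quantities into the single scalar $h$ before invoking Gehring.
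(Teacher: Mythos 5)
Your strategy — recast Lemma \ref{170807@lem1} as a weak reverse H\"older inequality for the scalar $h=(|D\bar u|^2+|\bar p|^2)^{1/2}$ with inhomogeneous term $F$, extend it to arbitrary balls, and then apply Gehring's lemma on bounded scales — is exactly the paper's route; the paper simply refers to \cite[Lemma 3.8]{MR3758532} for the same argument and omits the details you reconstruct.

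Two small slips in Step 1 that you should repair, although neither affects the viability of the approach. If $\tilde x_0\in\partial\Omega$ satisfies only $|x_0-\tilde x_0|\le R$, then applying Lemma \ref{170807@lem1} at $\tilde x_0$ with radius $2R$ yields control of averages over $B_R(\tilde x_0)$, but $B_{R/2}(x_0)\not\subset B_R(\tilde x_0)$ in general (points of $B_{R/2}(x_0)$ can lie up to distance $3R/2$ from $\tilde x_0$); you need a ball of radius at least $3R$ at $\tilde x_0$, and then the restriction $R\le 1$ inherited from Lemma \ref{170807@lem1} requires a covering argument for $R$ close to $1$. Also, the comparability of $|B_{R/2}(x_0)|$ to $|B_R(\tilde x_0)|$ is a fixed dimensional ratio of Euclidean ball volumes and has nothing to do with the exterior measure condition \eqref{180403@eq2}; that condition is used inside the proof of Lemma \ref{170807@lem1} (via boundary Poincar\'e), not in the ball-rearrangement step here.
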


\begin{proof}
By using Lemma \ref{170807@lem1} and Gehring's lemma, one can easily prove the lemma; see \cite[Lemma 3.8]{MR3758532}.
We omit the details here.
\end{proof}

In the lemma below, we prove the solvability of Stokes system in $\mathring{W}^1_q(\Omega)^2\times \tilde{L}_q(\Omega)$ when $q$ is close to $2$.

\begin{lemma}		\label{170518@lem1}
Let $\Omega$ satisfy \eqref{180403@eq2}.
Assume that Assumption \ref{A1} holds, and let 
$$
q\in \bigg[2-\frac{\varepsilon_0}{1+\varepsilon_0},2+\varepsilon_0\bigg],
$$
where  $\varepsilon_0=\varepsilon_0(\lambda, \theta,K_0)\in (0,1)$ is the constant from Lemma \ref{170807@lem2}.
Then for $f_\alpha\in L_q(\Omega)^2$ and $g\in \tilde{L}_q(\Omega)$, there exists a unique $(u,p)\in \mathring{W}^1_q(\Omega)^2\times \tilde{L}_q(\Omega)$ satisfying \eqref{170517_eq1}.
Moreover, we have
$$
\|Du\|_{L_q(\Omega)}+\|p\|_{L_q(\Omega)}\le C\big(\|f_\alpha\|_{L_q(\Omega)}+\|g\|_{L_q(\Omega)}\big),
$$
where $C=C(\lambda,\theta,K_0,\operatorname{diam}(\Omega),q)$.
\end{lemma}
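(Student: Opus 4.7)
The plan is to bootstrap Lemma~\ref{170517@lem1} in both directions around the $L_2$-theory. First I prove $L_q$-solvability for $q\in[2,2+\varepsilon_0]$ by combining the $L_2$-theory with the reverse Hölder inequality of Lemma~\ref{170807@lem2}; applied to $\cL^*$, this yields the analogous adjoint result for $q'\in[2,2+\varepsilon_0]$. Then I cover the subcritical range $q\in[q_{\min},2)$, with $q_{\min}:=2-\varepsilon_0/(1+\varepsilon_0)$, by a duality argument against the adjoint: a direct calculation gives $q_{\min}'=2+\varepsilon_0$, so the conjugate exponents of the two ranges match up precisely.

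\emph{Step 1 ($q\in[2,2+\varepsilon_0]$).} Given $f_\alpha\in L_q(\Omega)^2$ and $g\in\tilde{L}_q(\Omega)$, Hölder's inequality on the bounded domain places them in $L_2$, so Lemma~\ref{170517@lem1} delivers a unique $(u,p)\in\mathring{W}^1_2(\Omega)^2\times\tilde{L}_2(\Omega)$ with the $L_2$-estimate \eqref{180403@eq1}. Fix a finite cover of $\Omega$ by balls $B_{1/2}(x_i)$, $i=1,\dots,N$, with $N$ depending only on $\operatorname{diam}(\Omega)$; apply Lemma~\ref{170807@lem2} on each $B_1(x_i)\supset B_{1/2}(x_i)$, raise to the $q$-th power, sum, and absorb the $L_2$-norms of $(Du,p)$ via \eqref{180403@eq1} together with $\|\cdot\|_{L_2(\Omega)}\le C(\operatorname{diam}(\Omega))\|\cdot\|_{L_q(\Omega)}$. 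This gives $(u,p)\in\mathring{W}^1_q(\Omega)^2\times\tilde{L}_q(\Omega)$ with the required estimate; uniqueness is inherited from Lemma~\ref{170517@lem1} since $L_q(\Omega)\subset L_2(\Omega)$. Because $\cL^*$ satisfies the same ellipticity with the same constant $\lambda$, the very same argument yields $L_{q'}$-solvability of the adjoint problem for $q'\in[2,2+\varepsilon_0]$.

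\emph{Step 2 ($q\in[q_{\min},2)$).} Set $q'=q/(q-1)\in(2,2+\varepsilon_0]$. I first derive the a priori $L_q$-estimate for $L_2$-solutions. Suppose $(u,p)\in\mathring{W}^1_2(\Omega)^2\times\tilde{L}_2(\Omega)$ solves \eqref{170517_eq1} with data also lying in $L_q,\tilde{L}_q$. For arbitrary $F_\alpha\in L_{q'}(\Omega)^2$ and $G\in\tilde{L}_{q'}(\Omega)$, let $(v,\pi)\in\mathring{W}^1_{q'}(\Omega)^2\times\tilde{L}_{q'}(\Omega)$ be the adjoint solution of $\cL^* v+\nabla\pi=D_\alpha F_\alpha$, $\operatorname{div} v=G$, produced by Step~1. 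Since $q'\ge 2$ I may use $v$ as a test function for $(u,p)$; since $q<2$ and $\Omega$ is bounded, $u\in\mathring{W}^1_2\subset\mathring{W}^1_q$, so I may use $u$ as a test function for $(v,\pi)$. Subtracting the resulting identities yields
\[
\int_\Omega F_\alpha\cdot D_\alpha u\,dx - \int_\Omega p\,G\,dx = \int_\Omega f_\alpha\cdot D_\alpha v\,dx - \int_\Omega \pi\, g\,dx.
\]
Taking $G\equiv 0$ (resp.\ $F_\alpha\equiv 0$) and invoking the $L_{q'}$-bound on $(Dv,\pi)$ from Step~1, the duality characterizations of $\|Du\|_{L_q(\Omega)}$ and $\|p\|_{L_q(\Omega)}$ give the desired a priori bound; for the pressure I use the mean-zero property to replace an arbitrary $G\in L_{q'}$ by $G-(G)_\Omega\in\tilde{L}_{q'}$ at the cost of a constant factor. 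For existence when $f_\alpha,g$ lie only in $L_q$, I truncate: $f_\alpha^{(n)}:=f_\alpha\chi_{\{|f_\alpha|\le n\}}\in L_\infty\subset L_2$ and $g^{(n)}:=g\chi_{\{|g|\le n\}}-c_n$ with $c_n$ chosen to preserve mean zero; by dominated convergence, $f_\alpha^{(n)}\to f_\alpha$ and $g^{(n)}\to g$ in $L_q$. The associated $L_2$-solutions $(u_n,p_n)$ from Lemma~\ref{170517@lem1} automatically lie in $\mathring{W}^1_q\times\tilde{L}_q$ (bounded domain), and applying the a priori estimate to $(u_n-u_m,p_n-p_m)$ makes the sequence Cauchy in that space; its limit is the desired solution. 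Uniqueness follows by testing any null $\mathring{W}^1_q\times\tilde{L}_q$-solution against adjoint $L_{q'}$-solutions with arbitrary smooth $F_\alpha,G$.

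\emph{Main obstacle.} The technical heart is Step~2, where one must justify the two-sided test-function pairing between $\mathring{W}^1_q\times\tilde{L}_q$ and $\mathring{W}^1_{q'}\times\tilde{L}_{q'}$---valid precisely because $q$ and $q'$ are conjugate and $\Omega$ is bounded---and carefully manage the mean-zero constraint on the pressures when passing to duals. Step~1 is essentially a standard covering argument once the reverse Hölder inequality is available.
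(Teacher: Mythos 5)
Your strategy mirrors the paper's: you handle $q\in(2,2+\varepsilon_0]$ via the $L_2$-theory plus the reverse H\"older inequality and a covering argument, and $q\in[2-\varepsilon_0/(1+\varepsilon_0),2)$ by duality against the adjoint. The paper takes the same route (it cites \cite[Theorem 2.4]{MR3758532} for the duality step), so your Step~2 is essentially a worked-out version of an argument the paper leaves to a reference, including the truncation used there to get existence for merely $L_q$ data.

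There is, however, a genuine gap in Step~1. Lemma~\ref{170807@lem2} is stated under the hypotheses $f_\alpha\in L_\infty(\Omega)^2$ and $g\in\tilde L_\infty(\Omega)$, and you invoke it with data lying only in $L_q(\Omega)$, so as written Step~1 is outside the scope of the lemma you cite. The paper closes this by first running the argument for bounded data (reverse H\"older plus covering on balls of fixed radius), and then approximating a general pair $(f_\alpha,g)\in L_q\times\tilde L_q$ by the level-$k$ truncations $f_{\alpha,k},g_k\in L_\infty$; the corresponding solutions $(u_k,p_k)$ form a Cauchy sequence in $\mathring W^1_q(\Omega)^2\times\tilde L_q(\Omega)$ by linearity together with the uniform estimate, and the limit solves \eqref{170517_eq1}. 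You need exactly this approximation layer in Step~1---it is notable that you carry out precisely this truncation device in Step~2, so the omission in Step~1 looks like an oversight rather than a misconception. A smaller slip: subtracting the two weak formulations actually gives
\[
\int_\Omega F_\alpha\cdot D_\alpha u\,dx-\int_\Omega\pi\,g\,dx=\int_\Omega f_\alpha\cdot D_\alpha v\,dx-\int_\Omega p\,G\,dx,
\]
i.e.\ your display has the $pG$ and $\pi g$ terms swapped; since you pass immediately to absolute values this does not damage the estimate, but it should be corrected.
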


\begin{proof}
Consider the following three cases:
$$
q=2, \quad q\in \bigg[2-\frac{\varepsilon_0}{1+\varepsilon_0}, 2\bigg), \quad q\in (2,2+\varepsilon_0].
$$
The first case follows from Lemma \ref{170517@lem1}.
The second case is a simple consequence of the last case combined with the duality argument; see the proof of \cite[Theorem 2.4]{MR3758532}.
Hence, here we only prove the case with $q\in (2,2+\varepsilon_0]$.

First, we assume that $f_\alpha\in L_\infty(\Omega)^2$ and $g\in \tilde{L}_\infty(\Omega)$.
By Lemma \ref{170517@lem1}, there exists a unique $(u,p)\in \mathring{W}^1_2(\Omega)^2\times \tilde{L}_2(\Omega)$ satisfying \eqref{170517_eq1} and \eqref{180403@eq1}.
Thus from Lemma \ref{170807@lem2}, we see that $(u, p)$ belongs to $\mathring{W}^1_q(\Omega)^2\times \tilde{L}_q(\Omega)$, and that 
\begin{align*}
&\|Du\|_{L_q(\Omega_{1/2}(x_0))}+\|p\|_{L_q(\Omega_{1/2}(x_0))}\\
&\le C\big(\|Du\|_{L_2(\Omega)}+\|p\|_{L_2(\Omega)}+\|f_{\alpha}\|_{L_q(\Omega)}+\|g\|_{L_q(\Omega)}\big)\\
&\le C\big(\|f_\alpha\|_{L_q(\Omega)}+\|g\|_{L_q(\Omega)}\big)
\end{align*}
for any $x_0\in \overline{\Omega}$, where $C=C(\lambda, \theta,K_0,\operatorname{diam}(\Omega), q)$.
By applying a covering argument, we obtain the desired  estimate.

To complete the proof, let $f_\alpha\in L_q(\Omega)^2$ and $g\in \tilde{L}_q(\Omega)$.
For $k\in \{1,2,\ldots\}$, we define $f_{\alpha,k}=(f_{\alpha,k}^1,\ldots,f_{\alpha,k}^d)^{\top}$ and $g_k$ by 
$$
f_{\alpha,k}^i=\max\{-k, \min\{f_\alpha^i,k\}\}, \quad g_k=\max\{-k,\min\{g,k\}\}.
$$
Since $f_{\alpha,k}$ and $g_k$ are bounded, by  the above result, there exists a unique $(u_k, p_k)\in \mathring{W}^1_q(\Omega)^2\times \tilde{L}_q(\Omega)$ satisfying 
\begin{equation}		\label{180403@A2}
\left\{
\begin{aligned}
\operatorname{div} u_k=g_k-(g_k)_\Omega &\quad \text{in }\, \Omega,\\
\cL u_k +\nabla p_k=D_\alpha f_{\alpha,k} &\quad \text{in }\, \Omega,
\end{aligned}
\right.
\end{equation}
and
\begin{equation}		\label{180403@A1}
\|Du_k\|_{L_q(\Omega)}+\|p_k\|_{L_q(\Omega)}\le C\big(\|f_{\alpha,k}\|_{L_q(\Omega)}+\|g_k\|_{L_q(\Omega)}\big),
\end{equation}
where $C=C(\lambda,\theta,K_0,\operatorname{diam}(\Omega),q)$.
Note that $f_{\alpha,k} \to f_\alpha$ and $g_k \to g$ in $L_q(\Omega)$ as $k\to \infty$.
Then  by \eqref{180403@A1},  $\{(u_k, p_k)\}$ is a Cauchy sequence in $\mathring{W}^1_q(\Omega)^2\times \tilde{L}_q(\Omega)$, and thus, there exists $(u, p)\in \mathring{W}^1_q(\Omega)^2\times \tilde{L}_q(\Omega)$ such that $(u_k, p_k)\to (u, p)$ in $\mathring{W}^1_q(\Omega)^2\times \tilde{L}_q(\Omega)$ and 
$$
\|Du\|_{L_q(\Omega)}+\|p\|_{L_q(\Omega)}\le C\big(\|f_{\alpha}\|_{L_q(\Omega)}+\|g\|_{L_q(\Omega)}\big).
$$
Finally, by taking the limit of the system \eqref{180403@A2}, it can easily be seen that $(u, p)$ satisfies \eqref{170517_eq1}.
Thus, the lemma is proved.
\end{proof}

\begin{remark}		\label{180427@rmk1}
One can extend the result in Lemma \ref{170518@lem1} to a system
\begin{equation}		\label{180427@eq1}
\left\{
\begin{aligned}
\operatorname{div} u=g &\quad \text{in }\, \Omega,\\
\cL u+\nabla p=f+D_\alpha f_\alpha &\quad \text{in }\, \Omega,
\end{aligned}
\right.
\end{equation}
when $q\in (2,2+\varepsilon_0]$ and $f\in L_{2q/(2+q)}(\Omega)^2$.
Indeed, if we fix $R>0$ such that $\Omega\subset B_R=B_R(0)$, then by \cite[Lemma 3.1]{MR3809039},  there exist ${F}_\alpha\in \tilde{W}^1_{2q/(2+q)}(B_R)^2$, $\alpha\in \{1,2\}$, satisfying 
$$
D_\alpha {F}_\alpha=f \chi_{\Omega} \quad \text{in }\, B_R,\\
$$
$$
\|F_\alpha\|_{L_{q}(B_R)}+\|DF_\alpha\|_{L_{2q/(2+q)}(B_R)}\le C(q)\|f\|_{L_{2q/(2+q)}(\Omega)},
$$
where $\chi_\Omega$ is the characteristic function.
Thus, by Lemma \ref{170518@lem1} applied to \eqref{180427@eq1} with $D_\alpha(F_\alpha + f_\alpha)$ in place of $f + D_\alpha f_\alpha$, there exists a unique $(u, p)\in \mathring{W}^1_q(\Omega)^2\times \tilde{L}_q(\Omega)$ satisfying \eqref{180427@eq1}.
Moreover, we have 
\begin{align}
\nonumber
\|Du\|_{L_{q}(\Omega)}+\|p\|_{L_q(\Omega)}&\le C\big(\|F_\alpha+f_\alpha\|_{L_q(\Omega)}+\|g\|_{L_q(\Omega)}\big)\\
\label{180427@eq3a}
&\le C\big(\|f\|_{L_{2q/(q+2)}(\Omega)}+\|f_\alpha\|_{L_q(\Omega)}+\|g\|_{L_q(\Omega)}\big),
\end{align}
where $C=C(\lambda, \theta,K_0, \operatorname{diam}(\Omega),q)$.
\end{remark}

We finish this section by establishing a weak $L_2$-estimate.

\begin{lemma}		\label{170516@lem1}
Let $\Omega$ satisfy \eqref{180403@eq2}.
Let $f_\alpha\in L_{2,\infty}(\Omega)^2$ and $g\in \tilde{L}_{2,\infty}(\Omega)$.
Then under Assumption \ref{A1},  there exists a unique $(u,p)$ belonging to
$$
\bigcap_{q\in [1,2)}\mathring{W}^1_q(\Omega)^2\times \tilde{L}_q(\Omega)
$$
and satisfying \eqref{170517_eq1}.
Moreover, $(Du, p)\in L_{2,\infty}(\Omega)^{2\times 2}\times \tilde{L}_{2,\infty}(\Omega)$ with the estimate 
\begin{equation}		\label{170516@eq2}
\|Du\|_{L_{2,\infty}(\Omega)}+\|p\|_{L_{2,\infty}(\Omega)}\le C\big(\|f_\alpha\|_{L_{2,\infty}(\Omega)}+\|g\|_{L_{2,\infty}(\Omega)}\big),
\end{equation}
where $C=C(\lambda,\theta, K_0,\operatorname{diam}(\Omega), q)$.
\end{lemma}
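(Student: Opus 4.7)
The plan is to deduce both the existence statement and the weak-$L_2$ bound from the $L_q$-solvability provided by Lemma~\ref{170518@lem1}, using the fact that the endpoint $q=2$ lies in the \emph{interior} of the range of exponents covered by that lemma. Concretely, since $\Omega$ is bounded, $L_{2,\infty}(\Omega)\subset L_q(\Omega)$ for every $q\in[1,2)$, so the data $f_\alpha,g$ lie in $L_q(\Omega)$ for every such $q$. Fix $q_\star\in[2-\varepsilon_0/(1+\varepsilon_0),2)$; Lemma~\ref{170518@lem1} yields a unique $(u,p)\in\mathring{W}^1_{q_\star}(\Omega)^2\times\tilde{L}_{q_\star}(\Omega)$ solving \eqref{170517_eq1}. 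The inclusion $L_{q_\star}(\Omega)\subset L_q(\Omega)$ for $q\le q_\star$ gives membership in $\mathring{W}^1_q(\Omega)^2\times\tilde{L}_q(\Omega)$ for every $q\in[1,q_\star]$; for $q\in(q_\star,2)$, applying Lemma~\ref{170518@lem1} at exponent $q$ and then invoking uniqueness at $q_\star$ identifies the resulting solution with $(u,p)$. Uniqueness in the full intersection is immediate from uniqueness at any single exponent.

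For the weak-$L_2$ bound I would use real interpolation. Introduce the linear solution operator
$$T:(f_\alpha,g)\longmapsto (Du,p),$$
where $(u,p)$ is the pair supplied by Lemma~\ref{170518@lem1}. Choose $q_0\in[2-\varepsilon_0/(1+\varepsilon_0),2)$ and $q_1\in(2,2+\varepsilon_0]$. By Lemma~\ref{170518@lem1}, $T$ is bounded from $L_{q_i}(\Omega)^2\times\tilde{L}_{q_i}(\Omega)$ into $L_{q_i}(\Omega)^{2\times 2}\times\tilde{L}_{q_i}(\Omega)$ for $i=0,1$, with operator norm depending only on $\lambda,\theta,K_0,\operatorname{diam}(\Omega),q_i$. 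Since $L_{2,\infty}(\Omega)$ is the real interpolation space $(L_{q_0}(\Omega),L_{q_1}(\Omega))_{\vartheta,\infty}$ with $\tfrac{1}{2}=\tfrac{1-\vartheta}{q_0}+\tfrac{\vartheta}{q_1}$, the real interpolation (Marcinkiewicz) theorem produces a bounded extension of $T$ from $L_{2,\infty}(\Omega)^2\times\tilde{L}_{2,\infty}(\Omega)$ into $L_{2,\infty}(\Omega)^{2\times 2}\times\tilde{L}_{2,\infty}(\Omega)$, which is precisely \eqref{170516@eq2}.

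The principal technical point to be handled carefully is the mean-zero constraint on $g$ and $p$, since real interpolation is cleanest on ambient Lebesgue spaces rather than on the closed subspaces $\tilde{L}_q(\Omega)$. This is circumvented by pre- and post-composing $T$ with the projection $h\mapsto h-(h)_\Omega$, which is uniformly bounded on every $L_q(\Omega)$ and on $L_{2,\infty}(\Omega)$ because $\Omega$ has finite measure; Marcinkiewicz then applies to the extended operator on full Lebesgue spaces, and restricting back to mean-zero data gives the claimed bound. Aside from this bookkeeping, no further analytic obstacle arises.
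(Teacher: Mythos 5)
Your argument is correct and is, at its core, the same as the paper's: it interpolates the $L_q$-solvability from Lemma~\ref{170518@lem1} between an exponent below $2$ and one above $2$. The only difference is in packaging. The paper unfolds the Marcinkiewicz argument by hand: for each threshold $k>0$ it splits $f_\alpha$ and $g$ at level $k$ into a high part lying in $L_{q_1}(\Omega)$ with $q_1=2-\varepsilon_1<2$ and a low part lying in $L_{q_2}(\Omega)$ with $q_2=2+\varepsilon_1>2$, solves the two systems separately, bounds the distribution function of $|Du|+|p|$ at height $t$ by Chebyshev applied to each piece, and then optimizes by taking $k=t$. You instead invoke the real interpolation theorem together with the identification $(L_{q_0},L_{q_1})_{\vartheta,\infty}=L_{2,\infty}$, which encapsulates exactly that computation. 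Your remark about the mean-zero constraint is well taken and in fact tightens a small gloss in the paper's write-up: the level-set truncations $g^{\pm}$ need not individually satisfy $(g^{\pm})_\Omega=0$, so strictly one should replace them by $g^{\pm}-(g^{\pm})_\Omega$ (these corrections cancel on summing because $(g)_\Omega=0$); pre-composing the solution operator with the projection $h\mapsto h-(h)_\Omega$, as you propose, handles this uniformly and makes the interpolation step apply cleanly on the ambient Lebesgue spaces.
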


\begin{proof}
The proof of the lemma proceeds in a standard manner by applying the solvability result in Lemma \ref{170518@lem1} and an interpolation argument.
We present the proof for the sake of completeness.
Let $\varepsilon_0=\varepsilon_0(\lambda, \theta, K_0)\in (0,1)$ be the constant from Lemma \ref{170807@lem2}, and set 
$$
\varepsilon_1=\frac{\varepsilon_0}{1+\varepsilon_0}<\varepsilon_0.
$$
Then for each $q_0\in [2-\varepsilon_1, 2)$, by Lemma \ref{170518@lem1} and the fact that 
$$
f_\alpha\in L_{q_0}(\Omega)^2, \quad g\in \tilde{L}_{q_0}(\Omega),
$$
there exists a unique $(u,p)\in \mathring{W}^1_{q_0}(\Omega)^2\times \tilde{L}_{q_0}(\Omega)$ satisfying \eqref{170517_eq1}.
From the uniqueness of a  solution, the pair $(u, p)$ is unique and  belongs to $\mathring{W}^1_q(\Omega)^2\times \tilde{L}_q(\Omega)$ for all $q\in [1,2)$.
Set
$$
q_1=2-\varepsilon_1 \quad \text{and}\quad q_2=2+\varepsilon_1.
$$
For given $k>0$, we define $f_{\alpha}^+$ and $f_{\alpha}^-$ by 
$$
f_{\alpha}^+=
\left\{
\begin{aligned}
f_\alpha \quad \text{if }\, |f_\alpha |>k,\\
0 \quad \text{if }\, |f_\alpha|\le k,
\end{aligned}
\right.
\qquad 
f_{\alpha}^-=
\left\{
\begin{aligned}
0 \quad \text{if }\, |f_\alpha|>k,\\
f_{\alpha} \quad \text{if }\, |f_\alpha|\le k.
\end{aligned}
\right.
$$
Similarly, define $g^+\in L_{q_1}(\Omega)$ and $g^-\in L_{q_2}(\Omega)$.
Then we have
\begin{equation}		\label{170516@eq2a}
\begin{aligned}
f_\alpha^+\in L_{q_1}(\Omega)^2, \quad \|f_{\alpha}^+\|^{q_1}_{L_{q_1}(\Omega)}&\le \frac{2}{2-q_1}k^{q_1-2}\|f_\alpha\|_{L_{2,\infty}(\Omega)}^2, \\
f_\alpha^-\in L_{q_2}(\Omega)^2, \quad \|f_{\alpha}^-\|^{q_2}_{L_{q_2}(\Omega)}&\le \frac{q_2}{q_2-2}k^{q_2-2}\|f_\alpha\|_{L_{2,\infty}(\Omega)}^2,
\end{aligned}
\end{equation}
and
\begin{equation}		\label{170516@eq2b}
\begin{aligned}
g^+\in L_{q_1}(\Omega), \quad \|g_{1}\|^{q_1}_{L_{q_1}(\Omega)}&\le \frac{2}{2-q_1}k^{q_1-2}\|g\|_{L_{2,\infty}(\Omega)}^2,\\
g^-\in L_{q_2}(\Omega), \quad \|g_2\|^{q_2}_{L_{q_2}(\Omega)}&\le \frac{q_2}{2_2-q}k^{q_2-2}\|g\|_{L_{2,\infty}(\Omega)}^2.
\end{aligned}
\end{equation}
Indeed, for example, the first inequality of \eqref{170516@eq2a} follows from 
\begin{align*}
\|f_{\alpha}^+\|_{L_{q_1}(\Omega)}^{q_1}&=\bigg(\int_0^k +\int_k^\infty\bigg) q_1 t^{q_1-1}\big|\{x\in \Omega: |f_\alpha^+(x)|>t\}\big|\,dt\\
&\le q_1 k^{-2}\int_0^k t^{q_1-1}\,dt\cdot \|f_\alpha\|_{L_{2,\infty}(\Omega)}^2  + q_1\int_k^\infty t^{q_1-3}\,dt \cdot \|f_\alpha\|_{L_{2,\infty}(\Omega)}^{2}\\
&= \frac{2}{2-q_1} k^{q_1-2}\|f_\alpha\|_{L_{2,\infty}(\Omega)}^2.
\end{align*}
By Lemma \ref{170518@lem1}, there exists a unique
$(u^+,p^+)\in \mathring{W}^1_{q_1}(\Omega)^2\times \tilde{L}_{q_1}(\Omega)$ satisfying \eqref{170517_eq1} with $(f_{\alpha}^+, g^+)$ in place of $(f_{\alpha},g)$.
Moreover, we have 
$$
\|Du^+\|_{L_{q_1}(\Omega)}+\|p^+\|_{L_{q_1}(\Omega)}\le C\big(\|f_{\alpha}^+\|_{L_{q_1}(\Omega)}+\|g^+\|_{L_{q_1}(\Omega)}\big),
$$
where $C=C(\lambda,\theta,K_0,\operatorname{diam}(\Omega))$.
Using this together with \eqref{170516@eq2a} and \eqref{170516@eq2b}, we obtain for $t>0$ that 
\begin{align*}
t^2\big|\{x\in \Omega:|Du^+|+|p^+|>t/2\}\big|&\le 2^{q_1}t^{2-q_1} \int_\Omega \big(|Du^+|+|p^+|\big)^{q_1}\,dx \\
&\le Ct^{2-q_1}k^{q_1-2}\big(\|f_\alpha\|_{L_{2,\infty}(\Omega)}^2+\|g\|_{L_{2,\infty}(\Omega)}^2\big).
\end{align*}
Similarly, there exists a unique 
$(u^-,p^-)\in {W}^1_{q_2}(\Omega)^2\times \tilde{L}_{q_2}(\Omega)$
 satisfying \eqref{170517_eq1} with $(f_{\alpha}^-,g^-)$ in place of $(f_{\alpha},g)$, and  
$$
t^2\big|\{x\in \Omega:|Du^-|+|p^-|>t/2\}\big|\le C t^{2-q_2}k^{q_2-2}\big(\|f_\alpha\|_{L_{2,\infty}(\Omega)}^2+\|g\|_{L_{2,\infty}(\Omega)}^2\big).
$$
Combining these together and using the fact that  
$$
(u,p)=(u^+,p^+)+(u^-,p^-),
$$
we obtain
\begin{align*}
&t^2\big|\{x\in \Omega:|Du|+|p|>t\}\big|\\
&\le t^{2}\big(\big|\{x\in \Omega:|Du^+|+|p^+|>t/2\}\big|+\big|\{x\in \Omega:|Du^-|+|p^-|>t/2\}\big|\big)\\
&\le C(t^{2-q_1}k^{q_1-2}+t^{2-q_2}k^{q_2-2})\big(\|f_\alpha\|_{L_{2,\infty}(\Omega)}^2+\|g\|_{L_{2,\infty}(\Omega)}^2\big).
\end{align*}
Note that the above inequality holds for all $t,k>0$.
Therefore, by taking $k=t$, we get the desired estimate.
The lemma is proved.
\end{proof}

\section{Proofs of main results}				\label{S5}

\subsection{Proof of Theorem \ref{MT1}}		

Throughout this proof, we denote 
$$
q_0=2+\varepsilon_0 \quad \text{and} \quad q_1=\frac{2q_0}{2+q_0},
$$
where $\varepsilon_0=\varepsilon_0(\lambda, \theta, K_0)\in (0,1)$ is the constant from Lemma \ref{170807@lem2}.
Note that $q_0$ is the Sobolev conjugate of $q_1$, i.e. $q_0 = q_1^* = (2q_1)/(2-q_1)$.
We divide the proof into several steps.

{\em{Step 1.}}
In this step, we construct the Green function $(G, \Pi)$ satisfying the properties in Definition \ref{D1}.
We set
$$
\psi(x)=(\psi^1(x),\psi^2(x))^{\top}=\frac{x^{\top}}{2\pi |x|^2}.
$$
Then we have  
\begin{equation}		\label{170518_eq5}
\|\psi\|_{L_{2,\infty}(\bR^2)}\le \frac{1}{2\sqrt{\pi}}
\end{equation}
and 
$$
\operatorname{div}_x\psi(x-y)=\delta_y(x)
$$
in the sense that 
\begin{equation}		\label{171013@eq1}
\int_{\bR^2} \psi(x-y) \cdot \nabla\phi(x)\,dx=-\phi(y), \quad \forall y\in \bR^2, \quad \forall\phi\in C^\infty_0(\bR^2).
\end{equation}
For each $y\in \Omega$ and $\alpha,k\in \{1,2\}$, we set 
\begin{equation}		\label{170526_eq2}
f_{\alpha, y,k}(x)=-\psi^\alpha(x-y) e_k,
\end{equation}
where $e_k$ is the $k$-th unit vector in $\bR^2$.
By Lemma \ref{170516@lem1}, there exists $(v,\pi)=(v_{y,k}, \pi_{y,k})$ belonging to 
$$
\bigcap_{q\in[1,2)} \mathring{W}^1_q(\Omega)^2\times \tilde{L}_q(\Omega)
$$
and satisfying 
\begin{equation}		\label{170518_eq6}
\left\{
\begin{aligned}
\operatorname{div} v=0 &\quad \text{in }\, \Omega,\\
\cL v + \nabla \pi = D_\alpha f_{\alpha,y,k} &\quad \text{in }\, \Omega.
\end{aligned}
\right.
\end{equation}
Moreover, there is a version $\tilde{v}=\tilde{v}_{y,k}$ of $v$ such that $\tilde{v}=v$ a.e. in $\Omega$ and $\tilde{v}$ is continuous in $\Omega\setminus \{y\}$.
Indeed, 
by \eqref{171013@eq1} and \eqref{170518_eq6}, we see that  $(\eta v, \eta \pi)$ satisfies 
\begin{equation}		\label{180427@a1}
\left\{
\begin{aligned}
\operatorname{div} (\eta v)=\nabla \eta \cdot v &\quad \text{in }\, \Omega,\\
\cL (\eta v)+\nabla (\eta \pi)=F+D_\alpha F_\alpha &\quad \text{in }\, \Omega,
\end{aligned}
\right.
\end{equation}
where we set 
$$
F=A^{\alpha\beta}D_\beta v  D_\alpha \eta+\pi \nabla \eta, \quad 
F_\alpha=A^{\alpha\beta}D_\beta \eta v.
$$
Here, $\eta$ is a smooth function on $\bR^2$ satisfying 
$$
\eta\equiv 0\,\text{ on }\, B_{r/2}(y), \quad \eta\equiv 1 \, \text{ on }\, \bR^2\setminus B_r(y), \quad r>0.
$$
Since $F\in L_{q_1}(\Omega)^2$, $F_\alpha\in L_{q_0}(\Omega)^2$, and $\nabla \eta\cdot v\in L_{q_0}(\Omega)$,
by Remark \ref{180427@rmk1}, 
we have 
$$
(\eta v,\eta \pi)\in \mathring{W}^1_{q_0}(\Omega)^2\times L_{q_0}(\Omega),
$$
which implies that 
$$
(v,\pi)\in \mathring{W}^1_{q_0}(\Omega\setminus B_{r}(y))^2\times L_{q_0}(\Omega\setminus B_r(y)), \quad \forall r>0.
$$
Thus, by the Morrey-Sobolev embedding, 
there is a version $\tilde{v}=\tilde{v}_{y,k}$ of $v$ which is continuous in $\Omega \setminus \{y\}$.
We define a pair $(G,\Pi)$ by 
$$
G^{jk}(x,y)=\tilde{v}_{y,k}^j(x) \quad \text{and}\quad \Pi^k(x,y)=\pi_{y,k}(x).
$$
Here, $G$ is a $2\times 2$ matrix-valued function and $\Pi$ is a $1 \times 2$ vector-valued function on $\Omega\times \Omega$.

In the remainder of this step, we prove that $(G, \Pi)$ satisfies the properties $(i)$ -- $(iii)$ in Definition \ref{D1} so that $(G, \Pi)$ is the Green function (for the flow velocity) of $\cL$ in $\Omega$.
Clearly, the property $(i)$ holds.
To see the property $(ii)$, 
let $k\in \{1,2\}$ and $\phi\in \mathring{W}^1_\infty(\Omega)^2\cap C(\Omega)^2$.
Since $\operatorname{div} v_{y,k}=0$ in $\Omega$, the $k$-th column $G^{\cdot k}(\cdot,y)$ of $G(\cdot,y)$ satisfies $\operatorname{div} G^{\cdot k}(\cdot,y)=0$ in $\Omega$.
Notice from \eqref{171013@eq1} that 
$$
\begin{aligned}
\int_\Omega f_{\alpha,y,k} \cdot D_\alpha \phi\,dx&=-\int_\Omega \psi^\alpha(x-y)e_k \cdot D_\alpha \phi\,dx\\
&=-\int_{\Omega} \psi(x-y) \cdot \nabla \phi^k\,dx\\
&=\phi^k(y).
\end{aligned}
$$
From this equality and \eqref{170518_eq6}, it follows that 
$$
\int_\Omega A^{\alpha\beta}D_\beta G^{\cdot k}(\cdot ,y)\cdot D_\alpha \phi\,dx+\int_\Omega \Pi^k(\cdot ,y)\operatorname{div}\phi \,dx=\phi^k(y).
$$
To show the property $(iii)$, let $(u,p)\in \mathring{W}^1_2(\Omega)^2\times \tilde{L}_2(\Omega)$ be a weak solution of the adjoint problem \eqref{171010@eq1}.
By Lemma \ref{170518@lem1}, we see that 
\begin{equation}
							\label{eq0516_01}
(u,p)\in \mathring{W}^1_{q_1}(\Omega)^2\times \tilde{L}_{q_1}(\Omega) \quad \text{for some }\, q_1>2,
\end{equation}
and thus, by the Morrey-Sobolev embedding, there is a version of $u$, denoted by $\tilde{u}$, which is H\"older continuous in $\overline{\Omega}$.
From \eqref{eq0516_01} with $\tilde{u}$ in place of $u$ and  the fact that 
$$
(G(\cdot,y), \Pi(\cdot,y))\in \mathring{W}^1_q(\Omega)^2\times \tilde{L}_q(\Omega), \quad \forall q\in [1,2),
$$
we see that $\tilde{u}$ and $G^{\cdot k}(\cdot,y)$ are legitimate test functions to \eqref{171010@eq1} and \eqref{170518_eq6}, respectively.
By testing \eqref{170518_eq6} and \eqref{171010@eq1} with $\tilde{u}$ and $G^{\cdot k}(\cdot,y)$, respectively, we conclude that 
$$
\tilde{u}^k(y)=-\int_\Omega G^{\cdot k}(\cdot,y)\cdot f\,dx+\int_\Omega D_\alpha G^{\cdot k}(\cdot,y)\cdot f_\alpha\,dx+\int_\Omega \Pi^k(\cdot,y) g\,dx
$$
for all $y\in \Omega$.
Since $u=\tilde{u}$ a.e. in $\Omega$, the above identity implies  \eqref{170519@eq3}.
Thus, the property $(iii)$ holds.
Therefore, the pair $(G, \Pi)$ is the Green function of $\cL$ in $\Omega$.

{\em{Step 2.}}
In this step, we prove the assertions $(a)$ -- $(c)$ in Theorem \ref{MT1}.
The assertion $(a)$ follows immediately from \eqref{170516@eq2} and \eqref{170518_eq5}.

To prove the assertion $(b)$, 
we first claim that, for any $x\in \bR^2$, $y\in \Omega$, and $0<R<\operatorname{diam}(\Omega)$ satisfying $|x-y|>R$, we have
\begin{equation}		\label{180426@eq6a}
\begin{aligned}
&\|DG(\cdot,y)\|_{L_{q_0}(\Omega_{R/8}(x))} + \|\Pi(\cdot,y)\|_{L_{q_0}(\Omega_{R/8}(x))}\\
& \le CR^{-1}\big(\|DG(\cdot,y)\|_{L_{q_1}(\Omega_R(x))} +\|\Pi(\cdot,y)\|_{L_{q_1}(\Omega_R(x))}\big),
\end{aligned}
\end{equation}
where $C=C(\lambda, \theta, K_0, \operatorname{diam}(\Omega))$.
We consider the following two cases:
$$
B_{r}(x)\subset \Omega, \quad B_{r}(x)\cap \partial \Omega \neq \emptyset,
$$
where $r=R/4$.
\begin{enumerate}[i.]
\item
$B_{r}(x)\subset \Omega$. 
Let $\eta_1$ be a smooth function on $\bR^2$ satisfying 
$$
0\le \eta_1\le 1, \quad \eta_1\equiv 1 \, \text{ on }\, B_{r/2}(x), \quad \operatorname{supp}\eta_1 \subset B_{r}(x), \quad |\nabla \eta_1|\le 4r^{-1}.
$$
Then \eqref{180427@a1} holds with $v-(v)_{B_r(x)}$ and $\eta_1$ in place of $v$ and $\eta$.
Hence by \eqref{180427@eq3a} and the Poincar\'e inequality, we have 
$$
\begin{aligned}
&\|Dv\|_{L_{q_0}(B_{r/2}(x))} + \|\pi\|_{L_{q_0}(B_{r/2}(x))}\\
&\le C\big(\|F\|_{L_{q_1}(\Omega)}+\|F_\alpha\|_{L_{q_0}(\Omega)}+\|\nabla \eta_1\cdot (v-(v)_{B_r(x)})\|_{L_{q_0}(\Omega)}\big)\\
&\le Cr^{-1}\big(\|Dv\|_{L_{q_1}(B_r(x))}+\|\pi\|_{L_{q_1}(B_r(x))}\big),
\end{aligned}
$$
which gives \eqref{180426@eq6a}.
\item
$B_{r}(x)\cap \partial \Omega\neq \emptyset$.
We take $x_0\in B_r(x)\cap \partial \Omega$ such that $|x-x_0|=\operatorname{dist}(x, \partial \Omega)$, and observe that 
\begin{equation}		\label{180426@eq7}
B_{r/2}(x)\subset B_{3r/2}(x_0)\subset B_{3r}(x_0)\subset B_{4r}(x)=B_R(x).
\end{equation}
Let $\eta_2$ be a smooth function on $\bR^2$ satisfying 
$$
0\le \eta_2\le 1, \quad \eta_2\equiv 1 \, \text{ on }\, B_{3r/2}(x_0), \quad \operatorname{supp}\eta_2 \subset B_{3r}(x_0), \quad |\nabla \eta_2|\le 4r^{-1}.
$$
Since \eqref{180427@a1} holds with $\eta_2$ in place of $\eta$,
by using \eqref{180427@eq3a} and \eqref{180426@eq7}, we get 
$$
\begin{aligned}
&\|Dv\|_{L_{q_0}(\Omega_{r/2}(x))} + \|\pi\|_{L_{q_0}(\Omega_{r/2}(x))}\\
&\le C\big(\|F\|_{L_{q_1}(\Omega)}+\|F_\alpha\|_{L_{q_0}(\Omega)}+\|\nabla \eta_2\cdot v\|_{L_{q_0}(\Omega)}\big)\\
&\le Cr^{-1}\big(\|Dv\|_{L_{q_1}(\Omega_R(x))}+\|\pi\|_{L_{q_1}(\Omega_R(x))}\big),
\end{aligned}
$$
where we used the boundary Poincar\'e inequality together with \eqref{180405@eq4} in the last inequality.
This gives the inequality \eqref{180426@eq6a}.
\end{enumerate}

We are now ready to prove the estimates \eqref{180404@A2} and \eqref{180404@A3} in the assertion $(b)$.
Let $x\in \bR^2$, $y\in \Omega$, and $0<R<\operatorname{diam}(\Omega)$ with $|x-y|> R$.
We denote $M=\|DG(\cdot,y)\|_{L_{2,\infty}(\Omega_{R}(x))}$, and observe that 
$$
\begin{aligned}
&\|DG(\cdot,y)\|_{L_{q_1}(\Omega_{R}(x))}^{q_1}\\
&=q_{1}\bigg(\int_0^{M/R}+\int_{M/R}^\infty\bigg) t^{q_1-1}\big|\{z\in \Omega_R(x):|DG(\cdot,y)|>t\}\big|\,dt\\
&\le C R^{2-q_1} M^{q_1}.
\end{aligned}
$$
Similarly, we have 
$$
\|\Pi(\cdot,y)\|_{L_{q_1}(\Omega_{R}(x))}^{q_1}\le CR^{2-q_1}\|\Pi(\cdot,y)\|_{L_{2,\infty}(\Omega_R(x))}^{q_1}.
$$
By combining these together, and using \eqref{170807@eq6} and \eqref{180426@eq6a} with a covering argument, we obtain that
$$
\begin{aligned}
&\|DG(\cdot,y)\|_{L_{q_0}(\Omega_{R/2}(x))}+\|\Pi(\cdot,y)\|_{L_{q_0}(\Omega_{R/2}(x))}\\
&\le  CR^{-\gamma}\big(\|DG(\cdot,y)\|_{L_{2,\infty}(\Omega_{R}(x))}+\|\Pi(\cdot,y)\|_{L_{2,\infty}(\Omega_{R}(x))}\big)\\
&\le CR^{-\gamma},
\end{aligned}
$$
where $\gamma=2-2/q_1=1-2/q_0$ and $C=C(\lambda, \theta,K_0, \operatorname{diam}(\Omega))$.
This proves \eqref{180404@A2}.
We extend $G(\cdot,y)$ by zero on $\bR^2\setminus \Omega$.
Then using Morrey's inequality and the above inequality, we see that  
$$
[G(\cdot,y)]_{C^{\gamma}(B_{R/2}(x))} \le C\|DG(\cdot,y)\|_{L_{q_0}(B_{R/2}(x))}\le CR^{-\gamma}.
$$
This implies \eqref{180404@A3}, and thus the assertion $(b)$ is proved.
Note that by the above inequality, we have
\begin{equation}		\label{170524_eq1}
\left|G(z_0,y)-\dashint_{B_{R/2}(x)}G(z,y)\,dz\right|\le C_0
\end{equation}
for any $x,z_0\in \bR^2$, $y\in \Omega$, and $0<R<\operatorname{diam}(\Omega)$ satisfying $|x-y|>R$ and $z_0\in B_{R/2}(x)$, where $C_0=C_0(\lambda, \theta,K_0, \operatorname{diam}(\Omega))$.

We now turn to  the assertion $(c)$.
Let $x,y\in \Omega$ with $x\neq y$, and set 
$$
\rho=\frac{1}{4}|x-y|.
$$
Without loss of generality, we may assume that $x=0$ and $y=(-4\rho,0)$.
We choose a positive integer $k\ge 1$ satisfying
\begin{equation}		\label{170811@eq1}
2^k\rho < \frac{\operatorname{diam}(\Omega)}{2} \le 2^{k+1}\rho.
\end{equation}
For $i\in \{0, \ldots,k\}$, let $x_i=(\alpha_i,0)\in \bR^2$, where  
$$
\alpha_0=0, \quad \alpha_i=\alpha_{i-1}+2^{i}\rho = 2\rho(2^i-1), \quad i = 1, \ldots, k,
$$
and observe that 
$$
B_{2^i\rho}(x_i)\cap B_{2^{i-1}\rho}(x_{i-1})\neq\emptyset, \quad |x_i-y|>2^{i+1}\rho, \quad i\in \{1,\ldots,k\}.
$$
For each $i \in \{1, \ldots, k\}$, we choose $z_i \in B_{2^i \rho}(x_i) \cap B_{2^{i-1}\rho}(x_{i-1})$ and write
$$
\left|\dashint_{B_{2^{i-1}\rho}(x_{i-1})} G(z,y) \, dz\right| \leq \left|\dashint_{B_{2^{i-1}\rho}(x_{i-1})} G(z,y) \, dz - G(z_i,y)\right|
$$
$$
+ \left|G(z_i,y) - \dashint_{B_{2^i\rho}(x_i)} G(z,y) \, dz \right| + \left|\dashint_{B_{2^i\rho}(x_i)} G(z,y) \, dz\right|.
$$
Thanks to the estimate \eqref{170524_eq1}, this inequality implies that
$$
\left|\dashint_{B_{2^{i-1}\rho}(x_{i-1})}G(z,y)\,dz\right|\le 2C_0+\left|\dashint_{B_{2^{i}\rho}(x_{i})}G(z,y)\,dz\right|, \quad i\in \{1,\ldots,k\},
$$
and thus, by iterating we see that 
$$
\left|\dashint_{B_\rho(x)}G(z,y)\,dz\right|\le 2kC_0+\left|\dashint_{B_{2^k\rho}(x_k)}G(z,y)\,dz\right|.
$$
From this and \eqref{170524_eq1} we have
\begin{align}
\nonumber
|G(x,y)|&\le \left|G(x,y)-\dashint_{B_\rho(x)}G(z,y)\,dz\right|+\left|\dashint_{B_\rho(x)}G(z,y)\,dz\right|\\
\nonumber
&\le (2k+1)C_0+\left|\dashint_{B_{2^k\rho}(x_k)}G(z,y)\,dz\right|\\
\label{180405@eq1}
&\le C\bigg(\log \bigg(\frac{\operatorname{diam}(\Omega)}{\rho}\bigg)+\|G(\cdot,y)\|_{L_1(\Omega)}\bigg),
\end{align}
where the last inequality is due to the fact that (using \eqref{170811@eq1})
$$
1 \le k< \frac{1}{\log 2} \log \bigg(\frac{\operatorname{diam}(\Omega)}{\rho}\bigg), \quad |B_{2^k\rho}|\ge \bigg(\frac{\operatorname{diam}(\Omega)}{4}\bigg)^2.
$$
Notice from H\"older's inequality, the Sobolev inequality, and \eqref{170807@eq6} that 
\begin{align*}
\|G(\cdot,y)\|_{L_1(\Omega)}&\le C\|G(\cdot,y)\|_{L_2(\Omega)}\le C\|DG(\cdot,y)\|_{L_1(\Omega)}\\
&\le C\|DG(\cdot,y)\|_{L_{2,\infty}(\Omega)}\le C,
\end{align*}
where $C=C(\lambda, \theta,K_0,\operatorname{diam}(\Omega))$.
Therefore, from \eqref{180405@eq1} combined with the above inequalities we arrive at  \eqref{170807@eq7}.

{\em{Step 3.}}
In this step, we prove the identity \eqref{170526@eq1}.
For each $x\in \Omega$, we define the Green function $(G^*(\cdot ,x), \Pi^*(\cdot ,x))$ of the adjoint operator $\cL^*$ in the same manner that $(G(\cdot,y), \Pi(\cdot,y))$  is defined for the operator $\cL$. 
More precisely, we find a unique solution
$$
(w_{x,l}, \tau_{x,l})\in \bigcap_{q\in [1,2)} \mathring{W}^1_q(\Omega)^2\times \tilde{L}_q(\Omega)
$$
to the system  
$$
\left\{
\begin{aligned}
\operatorname{div} w_{x,l}=0 &\quad \text{in }\, \Omega,\\
\cL^* w_{x,l}+\nabla \tau_{x,l}=D_\alpha f_{\alpha,x,l} &\quad \text{in }\, \Omega,
\end{aligned}
\right.
$$
where $f_{\alpha,x,l}$ is the function as in \eqref{170526_eq2}.
Then we set $(w_{x,l}, \tau_{x,l})$ to be the $l$-th column of $(G^*(\cdot ,x), \Pi^*(\cdot ,x))$.
Using the arguments in {\em{Steps 1 and 2}}, we find that $(G^*, \Pi^*)$ satisfies the corresponding properties to those of $(G, \Pi)$.

Let $x,y\in \Omega$ with $x\neq y$, and denote $r=|x-y|/2$.
Let $\zeta$ be a smooth function in $\bR^2$ satisfying 
$$
\zeta\equiv 0\,\text{ on }\, B_{r/2}(x), \quad \zeta \equiv 1\, \text{ on }\, \bR^2 \setminus B_r(x).
$$
Observe that $\zeta G^*(\cdot,x)$ and $(1-\zeta)G^*(\cdot,x)$ can be applied to \eqref{170518_eq6} as test functions.
By testing the $l$-th columns of those functions to \eqref{170518_eq6}, and using the continuity of $G^*(\cdot,x)$ in $\Omega\setminus \{x\}$ and the fact that 
$$
G^*(\cdot,x)=\zeta G^*(\cdot,x)+(1-\zeta)G^*(\cdot,x), 
$$
we have 
$$
\int_\Omega A^{\alpha\beta}_{ij}D_\beta G^{jk}(\cdot,y)D_\alpha (G^*)^{il}(\cdot,x)\,dz=(G^*)^{kl}(y,x).
$$
Similarly, we obtain
$$
\int_\Omega A^{\alpha\beta}_{ij}D_\beta G^{jk}(\cdot,y)D_\alpha (G^*)^{il}(\cdot,x)\,dz=G^{lk}(x,y).
$$
By combining these together, we see that 
$$
G^{ l k}(x,y)=(G^*)^{k l}(y,x),
$$
which gives \eqref{170526@eq1}.
Finally, by the above identity and the continuity of $G^*(\cdot,x)$, it holds that $G(x,\cdot)$ is continuous in $\Omega\setminus \{x\}$.
By using this and the continuity of $G(\cdot,y)$ in $\Omega\setminus \{y\}$, we conclude that 
$G$ is continuous in $\{(x,y)\in \Omega\times \Omega:x\neq y\}$.
Thus, the theorem is proved.
\qed

\subsection{Proof of Theorem \ref{MT2}}		

To prove the theorem, we use the following interior $L^\infty$-estimate.

\begin{lemma}		\label{180405@lem1}
Let $R\in (0,1]$.
Suppose that the coefficients $A^{\alpha\beta}$ of $\cL$ are of partially Dini mean oscillation in $B_R=B_R(0)$ satisfying Definition \ref{D2} $(a)$ with a Dini function $\omega=\omega_A$.
If $(u, p)\in W^1_2(B_R)^2\times L_2(B_R)$ satisfies 
$$
\left\{
\begin{aligned}
\operatorname{div} u=0 &\quad \text{in }\, B_R,\\
\cL u+\nabla p=0 &\quad \text{in }\, B_R,
\end{aligned}
\right.
$$
then we have 
$$
(u, p)\in W^1_\infty(B_{R/2})^2\times L_\infty(B_{R/2})
$$
with the estimate
$$
\|Du\|_{L_\infty(B_{R/2})}+\|p\|_{L_\infty(B_{R/2})}\le CR^{-2}\big(\|Du\|_{L_1(B_R)}+\|p\|_{L_1(B_R)}\big),
$$
where $C=C(\lambda,\omega_A)$.
If we assume further that $A^{\alpha\beta}$ are of Dini mean oscillation with respect to all direction in $B_R$ satisfying Definition \ref{D2} $(b)$, then we have 
\begin{equation}		\label{180405@C1}
(u, p)\in C^1(\overline{B_{R/2}})^2\times C(\overline{B_{R/2}}).
\end{equation}
\end{lemma}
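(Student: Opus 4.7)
The estimates in this lemma are essentially restatements of the interior $W^1_\infty$- and $C^1$-estimates for Stokes systems with (partially) Dini mean oscillation coefficients proved in \cite{arXiv:1803.05560, arXiv:1805.02506}. The plan is to first reduce to the case $R = 1$ by scaling, then invoke the cited estimates and, if necessary, upgrade them to an $L_1$-based form by a standard iteration.

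First, I would rescale. Setting $\tilde u(x) = u(Rx)$, $\tilde p(x) = R\,p(Rx)$, and $\tilde A^{\alpha\beta}(x) = A^{\alpha\beta}(Rx)$, the pair $(\tilde u, \tilde p)$ satisfies $\operatorname{div}\tilde u = 0$ and $\tilde{\cL}\tilde u + \nabla \tilde p = 0$ in $B_1$, where $\tilde \cL$ has coefficients $\tilde A^{\alpha\beta}$. Since $R \in (0,1]$, the rescaled coefficients inherit the partially Dini mean oscillation property with modulus $\tilde\omega(r) = \omega_A(Rr) \le \omega_A(r)$, so all constants that appear depend only on $\lambda$ and $\omega_A$. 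It therefore suffices to prove the estimate for $R = 1$.

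Second, for $R = 1$ the cited results provide an interior bound of the form
$$\|D\tilde u\|_{L_\infty(B_{r_1})} + \|\tilde p - (\tilde p)_{B_1}\|_{L_\infty(B_{r_1})} \le C(r_2 - r_1)^{-1}\bigl(\|D\tilde u\|_{L_2(B_{r_2})} + \|\tilde p - (\tilde p)_{B_1}\|_{L_2(B_{r_2})}\bigr)$$
for all $1/2 \le r_1 < r_2 \le 1$. Applying the interpolation $\|f\|_{L_2}^2 \le \|f\|_{L_\infty}\|f\|_{L_1}$ on the right-hand side, absorbing the resulting $L_\infty$ factor via Young's inequality, and iterating with a standard covering lemma (e.g., Lemma 5.1 in Giaquinta) converts this into
$$\|D\tilde u\|_{L_\infty(B_{1/2})} + \|\tilde p - (\tilde p)_{B_1}\|_{L_\infty(B_{1/2})} \le C\bigl(\|D\tilde u\|_{L_1(B_1)} + \|\tilde p\|_{L_1(B_1)}\bigr).$$
Since $|(\tilde p)_{B_1}| \le C\|\tilde p\|_{L_1(B_1)}$, we can absorb the constant $(\tilde p)_{B_1}$ into the right-hand side at the cost of enlarging $C$, and scaling back yields the claimed estimate with the $R^{-2}$ factor. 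For the $C^1$ part, under the stronger hypothesis that $A^{\alpha\beta}$ are of Dini mean oscillation in all directions, the interior $C^1$-regularity result from \cite{arXiv:1805.02506} applied to $(\tilde u, \tilde p)$ gives \eqref{180405@C1}.

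The only delicate point is the pressure's invariance under constant shifts: the cited estimates are naturally formulated modulo constants, while the statement here has $\|p\|_{L_\infty}$ and $\|p\|_{L_1}$ directly. Decomposing $p = (p - (p)_{B_R}) + (p)_{B_R}$ and controlling the mean by $|B_R|^{-1}\|p\|_{L_1(B_R)}$ removes this obstruction cleanly, so once the cited interior regularity is invoked the rest is essentially bookkeeping.
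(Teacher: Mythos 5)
Your approach matches the paper's: the paper proves this lemma by citing \cite[Theorems 2.2 and 2.3, and Eq.\,(4.16)]{arXiv:1803.05560} together with scaling and covering arguments (referring to \cite{CD3} for details). Your scaling of $(u,p)$ to $(\tilde u, \tilde p)=(u(R\cdot), R\,p(R\cdot))$ and the observation that $\omega_A(R\cdot)\le \omega_A(\cdot)$ for $R\le 1$ are exactly the right reductions, and your upgrade from an $L_2$-based to an $L_1$-based bound via the interpolation $\|f\|_{L_2}^2\le \|f\|_{L_\infty}\|f\|_{L_1}$ followed by Young's inequality and the standard iteration lemma is a correct and standard way to obtain what the paper's cited Eq.\,(4.16) provides directly. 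The handling of the pressure mean shift is also sound. The only small discrepancy is attribution: the paper draws the interior $C^1$-regularity (the second part of the lemma) from \cite{arXiv:1803.05560} as well, not from \cite{arXiv:1805.02506}, which the paper uses only for the boundary estimates in Lemma \ref{180405@lem2}. This is a citation slip, not a mathematical gap.
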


\begin{proof}
The lemma follows from \cite[Theorems 2.2 and 2.3, and Eq.\,(4.16)]{arXiv:1803.05560} together with scaling and covering arguments.
For more details, see \cite{CD3}.
\end{proof}

Let $x,y\in \Omega$ with $0<|x-y|\le \frac{1}{2}\operatorname{dist}(y, \partial \Omega)$.
Set $R=|x-y|/2$.
Since $y\notin B_R(x)$ and $B_R(x)\subset \Omega$, by the property $(ii)$ in Definition \ref{D1},   $(G^{\cdot k}(\cdot,y), \Pi^k(\cdot,y))$ satisfies 
$$
\left\{
\begin{aligned}
\operatorname{div} G^{\cdot k}(\cdot,y)=0 &\quad \text{in }\, B_R(x),\\
\cL G^{\cdot k}(\cdot,y)+\nabla \Pi^k (\cdot,y)=0 &\quad \text{in }\, B_R(x).
\end{aligned}
\right.
$$
By Lemma \ref{180405@lem1} and a covering argument (in case  $R>1$), 
we have 
\begin{equation}		\label{180405@A1}
\begin{aligned}
&\|DG(\cdot,y)\|_{L_\infty(B_{R/2}(x))}+\|\Pi(\cdot,y)\|_{L_\infty(B_{R/2}(x))}\\
&\le CR^{-2}\big(\|DG(\cdot,y)\|_{L_1(B_R(x))}+\|p\|_{L_1(B_R(x))}\big),
\end{aligned}
\end{equation}
where $C=C(\lambda,\operatorname{diam}(\Omega), \omega_A)$.
We denote $M=\|DG(\cdot,y)\|_{L_{2,\infty}(B_R(x))}$, and observe that  
\begin{align}
\nonumber
&\|DG(\cdot,y)\|_{L_1(B_R(x))}\\
\nonumber
&=\bigg(\int_0^{M/R}+\int_{M/R}^\infty\bigg) \big|\{z\in B_R(x):|D_xG(z,y)|>t\}\big|\,dt\\
\label{180405@D1}
&\le CRM.
\end{align}
Similarly, we have 
$$
\|\Pi(\cdot,y)\|_{L_1(B_R(x))}\le CR\|\Pi(\cdot,y)\|_{L_{2,\infty}(B_R(x))}.
$$
By combining these together, we get from \eqref{180405@A1} and \eqref{170807@eq6} that 
$$
\|DG(\cdot,y)\|_{L_\infty(B_{R/2}(x))}+\|\Pi(\cdot,y)\|_{L_\infty(B_{R/2}(x))}\le C R^{-1},
$$
where $C=C(\lambda, \theta,K_0,\operatorname{diam}(\Omega),\omega_A)$.
This gives the desired estimate.
Thus the theorem is proved.
\qed

\subsection{Proof of Theorem \ref{MT3}}		

To prove the theorem, we use the following estimate on a $C^{1,\rm{Dini}}$ domain.

\begin{lemma}		\label{180405@lem2}
Let $\Omega$ have a $C^{1,\rm{Dini}}$ boundary as in Definition \ref{D3}.
Suppose that the coefficients $A^{\alpha\beta}$ of $\cL$ are of Dini mean oscillation in $\Omega$ satisfying Definition \ref{D2} $(b)$ with a Dini function $\omega=\omega_A$.
Let $x_0\in \Omega$ and $0<R<\operatorname{diam}(\Omega)$.
If $(u, p)\in W^1_2(\Omega_R(x_0))^2\times L_2(\Omega_R(x_0))$ satisfies 
$$
\left\{
\begin{aligned}
\operatorname{div} u=0 &\quad \text{in }\, \Omega_R(x_0),\\
\cL u+\nabla p=0 &\quad \text{in }\, \Omega_R(x_0),\\
u=0 &\quad \text{on }\, \partial \Omega \cap B_R(x_0),
\end{aligned}
\right.
$$
then we have 
$$
(u, p)\in C^1(\overline{\Omega_{R/2}(x_0)})^2\times C(\overline{\Omega_{R/2}(x_0)})
$$
and 
$$
\begin{aligned}
&\|Du\|_{L_\infty(\Omega_{R/2}(x_0))}+\|p\|_{L_\infty(\Omega_{R/2}(x_0))}\\
&\le CR^{-3}\|u\|_{L_1(\Omega_{R}(x_0))}+CR^{-2}\big(\|Du\|_{L_1(\Omega_{R}(x_0))}+\|p\|_{L_1(\Omega_{R}(x_0))}\big),
\end{aligned}
$$
where $C=C(\lambda, \operatorname{diam}(\Omega), \omega_A, R_0, \varrho_0)$.
\end{lemma}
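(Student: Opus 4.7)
The plan is to mimic the proof of Lemma \ref{180405@lem1}, replacing the interior $W^1_\infty$ and $C^1$-estimates used there with their boundary counterparts on $C^{1,\rm{Dini}}$ domains. Since $\Omega$ has a $C^{1,\rm{Dini}}$ boundary and the coefficients $A^{\alpha\beta}$ are of Dini mean oscillation in all directions, the relevant boundary $C^1$-estimate for the Stokes system is available in \cite{arXiv:1803.05560, arXiv:1805.02506} (and summarized in \cite{CD3}). After flattening the boundary via the local $C^{1,\rm{Dini}}$ graph and rescaling from unit scale to scale $R$, this yields
\[
\|Du\|_{L_\infty(\Omega_{R/2}(x_0))} + \|p\|_{L_\infty(\Omega_{R/2}(x_0))} \le C\big(R^{-2}\|u\|_{L_2(\Omega_R(x_0))} + R^{-1}\|Du\|_{L_2(\Omega_R(x_0))} + R^{-1}\|p\|_{L_2(\Omega_R(x_0))}\big),
\]
together with the continuity of $(Du,p)$ up to $\partial\Omega \cap B_{R/2}(x_0)$; the powers of $R$ are dictated by scaling, and the constant depends only on $\lambda, \operatorname{diam}(\Omega), \omega_A, R_0, \varrho_0$.

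To convert the $L_2$ norms on the right-hand side into $L_1$ norms, I would apply the standard interpolation
\[
\|f\|_{L_2(\Omega_\rho(x_0))}^2 \le \|f\|_{L_\infty(\Omega_\rho(x_0))}\cdot \|f\|_{L_1(\Omega_\rho(x_0))}
\]
with $f = u, Du, p$ on a family of radii $\rho \in [R/2, R]$, followed by Young's inequality. This produces a small multiple of $\|Du\|_{L_\infty} + \|p\|_{L_\infty}$ on the right, which can be absorbed into the left via the standard iteration lemma (e.g.\ Giaquinta's technical lemma), together with $R^{-3}\|u\|_{L_1} + R^{-2}(\|Du\|_{L_1} + \|p\|_{L_1})$, matching the stated scaling. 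A routine covering argument then extends the estimate from the normalized scale to the full $\Omega_{R/2}(x_0)$.

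The main obstacle is to verify that the boundary $C^1$-estimate from the external references can be invoked with constants depending only on the indicated parameters. Concretely, one must check that the $C^{1,\rm{Dini}}$ flattening diffeomorphism converts the Dini mean oscillation condition on $A^{\alpha\beta}$ in $\Omega$ into the corresponding flat-domain condition with a new Dini modulus controlled by $\omega_A$, $\varrho_0$, and $R_0$; this follows from the composition of Dini functions under a $C^{1,\rm{Dini}}$ change of variables, but the bookkeeping must be done carefully. Once this is in hand, the scale-invariant form of the boundary estimate transfers directly back to $\Omega_R(x_0)$, and the rest is the interpolation-iteration step described above.
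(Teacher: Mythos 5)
Your approach is essentially the paper's: both rest on the boundary $C^1$-estimate for Stokes systems on $C^{1,\mathrm{Dini}}$ domains from \cite{arXiv:1805.02506}, applied via a localization and scaling argument (the paper's proof is precisely that citation, with details deferred to \cite{CD3}). Two remarks on the extra machinery you introduce. The interpolation--iteration converting $L_2$-norms to $L_1$-norms on the right is most likely an unnecessary detour: the $R$-powers in the stated conclusion ($R^{-3}$ on $\|u\|_{L_1}$, $R^{-2}$ on $\|Du\|_{L_1}+\|p\|_{L_1}$) are exactly what rescaling produces from a unit-scale estimate that is \emph{already} in $L_1$-form, as happens for the interior analogue Lemma \ref{180405@lem1}, and Eq.\,(2.27) of \cite{arXiv:1805.02506} is of that type. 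If you nonetheless route through $L_2$, be aware that interpolating $f=u$ yields a small multiple of $\|u\|_{L_\infty}$, not of $\|Du\|_{L_\infty}+\|p\|_{L_\infty}$ as you claim; the quantity iterated must therefore include $\|u\|_{L_\infty(\Omega_\rho)}$ as well, which is manageable (for instance via a boundary Poincar\'e inequality, using that $u$ vanishes on $\partial\Omega\cap B_R(x_0)$, or by reducing to Lemma \ref{180405@lem1} in the interior case), but as written the absorption step has a small gap. Finally, the flattening/change-of-variables concern you flag is internal to \cite{arXiv:1805.02506}: its results are stated directly for $C^{1,\mathrm{Dini}}$ domains with coefficients of Dini mean oscillation, so you may cite them without re-verifying how the Dini modulus transforms.
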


\begin{proof}
The lemma follows from \cite[Theorem 1.4 and Eq.\,(2.27)]{arXiv:1805.02506} with a localization argument.
For more details, see \cite{CD3}.
\end{proof}

Let $x,y\in \Omega$ with $x\neq y$ and $R=|x-y|/2$.
From the property $(ii)$ in Definition \ref{D1}, we see that
$$
\left\{
\begin{aligned}
\operatorname{div} G^{\cdot k}(\cdot,y)=0 &\quad \text{in }\, \Omega_R(x),\\
\cL G^{\cdot k}(\cdot,y)+\nabla \Pi^k (\cdot,y)=0 &\quad \text{in }\, \Omega_R(x),\\
G^{\cdot k}(\cdot,y)=0 &\quad \text{on }\, \partial \Omega \cap B_R(x).
\end{aligned}
\right.
$$
Then by Lemma \ref{180405@lem2}, we have 
$$
(G(\cdot,y), \Pi(\cdot,y))\in C^1(\overline{\Omega_{R/2}(x)})^{2\times 2}\times C(\overline{\Omega_{R/2}(x)})^2,
$$
which shows \eqref{eq0517_01}.
To prove the estimate \eqref{180405@F1}, we consider the following two cases:
$$
\partial \Omega\cap B_R(x) = \emptyset, \quad \partial \Omega\cap   B_R(x) \neq \emptyset.
$$
\begin{enumerate}[i.]
\item
$\partial \Omega \cap B_R(x)= \emptyset$.
Since $\Omega_R(x)=B_R(x)$, by following the proof of Theorem \ref{MT2}, we have 
$$
\|DG(\cdot,y)\|_{L_\infty(B_{R/2}(x))}+\|\Pi(\cdot,y)\|_{L_\infty(B_{R/2}(x))}\le C R^{-1},
$$
where $C=C(\lambda, \operatorname{diam}(\Omega),\omega_A, R_0, \varrho_0)$.
Together with the continuity of $DG(\cdot,y)$ and $\Pi(\cdot,y)$, this implies \eqref{180405@F1}.
\item
$\partial \Omega \cap B_R(x)\neq \emptyset$.
In this case, by Lemma \ref{180405@lem2}, we have 
\begin{equation}		\label{180405@E1}
\begin{aligned}
&\|DG(\cdot,y)\|_{L_\infty(\Omega_{R/2}(x))}+\|\Pi(\cdot,y)\|_{L_\infty(\Omega_{R/2}(x))}\le CR^{-3}\|G(\cdot,y)\|_{L_1(\Omega_{R}(x))}\\
&\quad +CR^{-2}\big(\|DG(\cdot,y)\|_{L_1(\Omega_{R}(x))}+\|\Pi(\cdot,y)\|_{L_1(\Omega_{R}(x))}\big),
\end{aligned}
\end{equation}
where $C=C(\lambda, \operatorname{diam}(\Omega), \omega_A, R_0, \varrho_0)$.
Note that (see \eqref{180405@D1})
\begin{equation}		\label{180406@eq1}
\begin{aligned}
&\|DG(\cdot,y)\|_{L_1(\Omega_{R}(x))}+\|\Pi(\cdot,y)\|_{L_1(\Omega_{R}(x))}\\
&\le CR\big( \|DG(\cdot,y)\|_{L_{2,\infty}(\Omega)}+\|\Pi(\cdot,y)\|_{L_{2,\infty}(\Omega)}\big)\le CR,
\end{aligned}
\end{equation}
where the last inequality is due to \eqref{170807@eq6}.
Fix a point $z_0\in \partial \Omega \cap B_R(x)$.
Since $G(z_0, y)=0$, we obtain by \eqref{180404@A3} that
$$
|G(z,y)|=|G(z,y)-G(z_0,y)|\le  CR^{-\gamma} |z-z_0|^\gamma\le C
$$
for all $z\in \Omega_R(x)$,
where $C=C(\lambda, \operatorname{diam}(\Omega), R_0, \varrho_0)$.
This implies 
$$
\|G(\cdot,y)\|_{L_1(\Omega_R(x))}\le CR^2,
$$
and thus, using \eqref{180405@E1} and \eqref{180406@eq1}, we conclude that 
$$
\|DG(\cdot,y)\|_{L_\infty(\Omega_{R/2}(x))}+\|\Pi(\cdot,y)\|_{L_\infty(\Omega_{R/2}(x))}\le CR^{-1}.
$$
Finally, by the continuity of $DG(\cdot,y)$ and $\Pi(\cdot,y)$, we get the desired estimate \eqref{180405@F1}.
\end{enumerate}
The theorem is proved.
\qed

\bibliographystyle{plain}

\begin{thebibliography}{10}
\bibitem{MR2663713}
Hammadi Abidi, Guilong Gui, and Ping Zhang.
\newblock On the decay and stability of global solutions to the 3{D}
  inhomogeneous {N}avier-{S}tokes equations.
\newblock {\em Comm. Pure Appl. Math.}, 64(6):832--881, 2011.

\bibitem{MR2263708}
Gabriel Acosta, Ricardo~G. Dur\'an, and Mar\'\i a~A. Muschietti.
\newblock Solutions of the divergence operator on {J}ohn domains.
\newblock {\em Adv. Math.}, 206(2):373--401, 2006.

\bibitem{MR3086390}
Ariel Barton.
\newblock Elliptic partial differential equations with almost-real
  coefficients.
\newblock {\em Mem. Amer. Math. Soc.}, 223(1051), 2013.

\bibitem{arXiv:1805.02506}
Jongkeun Choi and Hongjie Dong.
\newblock Gradient estimates for {S}tokes systems in domains.
\newblock {\em Dyn. Partial Differ. Equ.} (to appear), {\em arXiv:1805.02506}.

\bibitem{arXiv:1803.05560}
Jongkeun Choi and Hongjie Dong.
\newblock Gradient estimates for {S}tokes systems with {D}ini mean oscillation
  coefficients.
\newblock {\em J. Differential Equations} (to appear),
https://doi.org/10.1016/j.jde.2018.10.001.

\bibitem{CD3}
Jongkeun Choi and Hongjie Dong.
\newblock {G}reen function for pressure of {S}tokes system.
\newblock {\em in preparation}.

\bibitem{arXiv:1804.10588}
Jongkeun Choi, Hongjie Dong, and Doyoon Kim.
\newblock Green functions of conormal derivative problems for {S}tokes system.
\newblock {\em J. Math. Fluid Mech.} (to appear), https://doi.org/10.1007/s00021-018-0387-0.

\bibitem{MR3809039}
Jongkeun Choi, Hongjie Dong, and Doyoon Kim.
\newblock Conormal derivative problems for stationary {S}tokes system in
  {S}obolev spaces.
\newblock {\em Discrete Contin. Dyn. Syst.}, 38(5):2349--2374, 2018.

\bibitem{MR3693868}
Jongkeun Choi and Ki-Ahm Lee.
\newblock The {G}reen function for the {S}tokes system with measurable
  coefficients.
\newblock {\em Commun. Pure Appl. Anal.}, 16(6):1989--2022, 2017.

\bibitem{MR3670039}
Jongkeun Choi and Minsuk Yang.
\newblock Fundamental solutions for stationary {S}tokes systems with measurable
  coefficients.
\newblock {\em J. Differential Equations}, 263(7):3854--3893, 2017.

\bibitem{MR0890159}
Bj\"{o}rn E.~J. Dahlberg and Carlos~E. Kenig.
\newblock Hardy spaces and the {N}eumann problem in {$L^p$} for {L}aplace's
  equation in {L}ipschitz domains.
\newblock {\em Ann. of Math. (2)}, 125(3):437--465, 1987.


\bibitem{MR1354111}
Georg Dolzmann and Stefan M\"uller.
\newblock Estimates for {G}reen's matrices of elliptic systems by {$L^p$}
  theory.
\newblock {\em Manuscripta Math.}, 88(2):261--273, 1995.

\bibitem{MR3758532}
Hongjie Dong and Doyoon Kim.
\newblock Weighted {$L_q$}-estimates for stationary {S}tokes system with
  partially {BMO} coefficients.
\newblock {\em J. Differential Equations}, 264(7):4603--4649, 2018.

\bibitem{MR2485428}
Hongjie Dong and Seick Kim.
\newblock Green's matrices of second order elliptic systems with measurable
  coefficients in two dimensional domains.
\newblock {\em Trans. Amer. Math. Soc.}, 361(6):3303--3323, 2009.

\bibitem{MR0975121}
E.~B. Fabes, C.~E. Kenig, and G.~C. Verchota.
\newblock The {D}irichlet problem for the {S}tokes system on {L}ipschitz
  domains.
\newblock {\em Duke Math. J.}, 57(3):769--793, 1988.

\bibitem{arXiv:1710.05383}
Shu Gu and Jinping Zhuge.
\newblock Periodic homogenization of {G}reen's functions for {S}tokes systems.
\newblock {\em arXiv:1710.05383}.

\bibitem{MR0425391}
{O}l'ga~Aleksandrovna Lady{\v{z}}enskaja and {V}sevolod~{A}lekseevich
  Solonnikov.
\newblock The unique solvability of an initial-boundary value problem for
  viscous incompressible inhomogeneous fluids.
\newblock {\em Zap. Nau\v cn. Sem. Leningrad. Otdel. Mat. Inst. Steklov.
  (LOMI)}, 52:52--109, 218--219, 1975.

\bibitem{MR0254401}
{O}l'ga~{A}leksandrovna Ladyzhenskaya.
\newblock {\em The mathematical theory of viscous incompressible flow}.
\newblock Second English edition, revised and enlarged. Translated from the
  Russian by Richard A. Silverman and John Chu. Mathematics and its
  Applications, Vol. 2. Gordon and Breach, Science Publishers, New
  York-London-Paris, 1969.

\bibitem{MR1422251}
Pierre-Louis Lions.
\newblock {\em Mathematical topics in fluid mechanics. {V}ol. 1}, volume~3 of
  {\em Oxford Lecture Series in Mathematics and its Applications}.
\newblock The Clarendon Press, Oxford University Press, New York, 1996.

\bibitem{MR0725151}
Vladimir~Gilelevich Maz'ya and Boris Plamenevski{\u\i}.
\newblock The first boundary value problem for classical equations of
  mathematical physics in domains with piecewise-smooth boundaries. {I}.
\newblock {\em Z. Anal. Anwendungen}, 2(4):335--359, 1983.

\bibitem{MR0734895}
Vladimir~Gilelevich Maz'ya and Boris Plamenevski{\u\i}.
\newblock The first boundary value problem for classical equations of
  mathematical physics in domains with piecewise smooth boundaries. {II}.
\newblock {\em Z. Anal. Anwendungen}, 2(6):523--551, 1983.

\bibitem{MR2182091}
Vladimir~Gilelevich Maz'ya and J\"urgen Rossmann.
\newblock Pointwise estimates for {G}reen's kernel of a mixed boundary value
  problem to the {S}tokes system in a polyhedral cone.
\newblock {\em Math. Nachr.}, 278(2005):1766--1810, 2005.

\bibitem{MR2321139}
Vladimir~Gilelevich Maz'ya and J\"urgen Rossmann.
\newblock {$L_p$} estimates of solutions to mixed boundary value problems for
  the {S}tokes system in polyhedral domains.
\newblock {\em Math. Nachr.}, 280(7):751--793, 2007.


\bibitem{arXiv:1805.00235}
S.E. Mikhailov and C.F. Portillo.
\newblock Analysis of boundary-domain integral equations to the mixed {BVP} for
  a compressible {S}tokes system with variable viscosity.
\newblock {\em arXiv:1805.00235}.

\bibitem{MR2763343}
Dorina Mitrea and Irina Mitrea.
\newblock On the regularity of {G}reen functions in {L}ipschitz domains.
\newblock {\em Comm. Partial Differential Equations}, 36(2):304--327, 2011.

\bibitem{MR2987056}
Marius Mitrea and Matthew Wright.
\newblock Boundary value problems for the {S}tokes system in arbitrary
  {L}ipschitz domains.
\newblock {\em Ast\'erisque}, (344):viii+241, 2012.


\bibitem{MR3320459}
Katharine~A. Ott, Seick Kim, and Russell~Murray Brown.
\newblock The {G}reen function for the mixed problem for the linear {S}tokes
  system in domains in the plane.
\newblock {\em Math. Nachr.}, 288(4):452--464, 2015.

\bibitem{MR1223521}
Zhong~Wei Shen.
\newblock A note on the {D}irichlet problem for the {S}tokes system in
  {L}ipschitz domains.
\newblock {\em Proc. Amer. Math. Soc.}, 123(3):801--811, 1995.

\bibitem{MR2353255}
Zhongwei Shen.
\newblock The {$L^p$} boundary value problems on {L}ipschitz domains.
\newblock {\em Adv. Math.}, 2007(1):212--254, 216.

\bibitem{MR3034453}
Justin~L. Taylor, Katharine~A. Ott, and Russell~M. Brown.
\newblock The mixed problem in {L}ipschitz domains with general decompositions
  of the boundary.
\newblock {\em Trans. Amer. Math. Soc.}, 365(6):2895--2930, 2013.


\end{thebibliography}

\end{document}